\numberwithin{equation}{section}
\theoremstyle{theorem}
\newtheorem{theorem}{Theorem}[section]
\newtheorem{corollary}[theorem]{Corollary}
\newtheorem{lemma}[theorem]{Lemma}
\theoremstyle{definition}
\newtheorem{remark}[theorem]{Remark}
\newenvironment{keywords}{\bigskip\textbf{Keywords.}\enspace}{}
\def\beq{\begin{equation}}
\def\beql#1{\beq\label{#1}}
\def\eeq{\end{equation}}
\def\beqa{\beq\begin{aligned}}
\def\beqal#1{\beql{#1}\begin{aligned}}
\def\eeqa{\end{aligned}\eeq}
\def\bseq{\begin{subequations}}
\def\bseql#1{\begin{subequations}\label{#1}}
\def\eseq{\end{subequations}}
\def\deq#1{(\ref{#1})}
\def\bib#1{\bibitem{#1}}
\def\rb#1{\left(#1\right)}
\def\frb#1{\!\rb{#1}}
\def\brb#1{\big(#1\big)}
\def\sqb#1{\left[#1\right]}
\def\abs#1{\left|#1\right|}
\def\babs#1{\big|#1\big|}
\def\norm#1{\left\|#1\right\|}
\def\hpar#1{^{(#1)}}
\def\mat#1{\begin{matrix} #1 \end{matrix}}
\def\veps{\varepsilon}
\def\a{\alpha}
\def\G{\Gamma}
\def\g{\gamma}
\def\lam{\lambda}
\def\om{\omega}
\def\Del{\Delta}
\def\del{\delta}
\def\ka{\kappa}
\def\vphi{\varphi}
\def\ol{\overline}
\def\C{\mathbb{C}}
\def\R{\mathbb{R}}
\def\cB{\mathcal{B}}
\def\cI{\mathcal{I}}
\def\sbt{\subset}
\def\dif{\operatorname{d}\!}
\def\re{\operatorname{Re}}
\def\me{\mathrm{e}}
\def\diag{\operatorname{diag}}
\def\sfw{0.49\linewidth}
\def\ssfw{1.0\linewidth}
\def\sSubref#1{(\subref*{#1})~}
\def\delt{h}
\def\tref{{\mathrm{ref}}}
\def\One{\mathbf{1}}
\def\One{\boldsymbol{1}}
\def\Gf#1{\G\frb{#1}}
\begin{document}

\title{A Gauss-Jacobi Kernel Compression Scheme for Fractional Differential Equations%
	\thanks{This is a version of an article published in Journal of Scientific Computing.
	The final publication is available online at http://dx.doi.org/10.1007/s10915-018-0848-x}}
\author{Daniel Baffet%
		\thanks{Department of Mathematics and Computer Science,
		University of Basel,
		Spiegelgasse 1, 4051 Basel, Switzerland}} 
\date{\today}

\maketitle
\thispagestyle{empty}
\markboth{{D.\ Baffet}}
	{{A Gauss-Jacobi Kernel Compression Scheme}}


\begin{abstract}
A scheme for approximating the kernel $w$ of the fractional $\a$-integral by a linear combination of exponentials is proposed and studied.
The scheme is based on the application of a composite Gauss-Jacobi quadrature rule to an integral representation of $w$.
This results in an approximation of $w$ in an interval $[\del,T]$, with $0<\del$, which converges rapidly in the number $J$ of quadrature nodes associated with each interval of the composite rule.
Using error analysis for Gauss-Jacobi quadratures for analytic functions, an estimate of the relative pointwise error is obtained.
The estimate shows that the number of terms required for the approximation to satisfy a prescribed error tolerance is bounded for all $\a\in(0,1)$, and that $J$ is bounded for $\a\in(0,1)$, $T>0$, and $\del\in(0,T)$.

\begin{keywords}
Fractional differential equations; Volterra equations; Gaussian quadratures; kernel compression; local schemes.
\end{keywords}
\end{abstract}

\section{Introduction}

The nonlocal nature of the fractional integral and the singularity of its kernel make the numerical treatment of fractional differential equations (FDEs) considerably more difficult than that of standard differential equations.
Direct approaches (e.g., \cite{PredCorr,YA,jin,brun} and the references therein) for discretizing FDEs involving a fractional operator with respect to time require that the entire solution history is stored and used throughout the computation.
This can be expensive in terms of both computational and memory costs.

Several approaches for reducing these costs have been proposed.
In particular, methods based on the approximation of the kernel $w$ of the fractional $\a$-integral by a linear combination of exponentials have been proven effective.
See for example \cite{FastConvNRBC,OblConvQuad,AdaptOblConv,ErrEstTrpInvLap,AppExpSum}, and \cite{jrl}.
In this approach, the fractional integral $\cI^\a f\frb{t+\del}=w\ast f\frb{t+\del}$ of a function $f$ at $t+\del$, with $t\ge0$ and $\del>0$, is split into a local term and a history term of the form $w_\del\ast f\frb{t}$ with $w_\del\frb{t}=w\frb{t+\del}$.
Thus the approximation to the history term $w_\del\ast f$ of the fractional integral is given by the convolution of $f$ and $S$ of the form
\beql{eq:expSum}
	S\frb{t}=\sum_{p=1}^P b_p\me^{-a_p t}\ ,
\eeq
where, in general, $a_p$ and $b_p$, with $p=1,\ldots,P$, are complex numbers.
This approach has two main advantages.
The first is that the convolution $S\ast f$ requires only local information of $f$ to advance.
This property is discussed further in Section \ref{sec:overview}.
The second strength of the approach is that $w$ may be approximated very well at a positive distance $\del$ from its singularity by $S$ of the form \deq{eq:expSum} with a relatively small number of terms.
Approximations are usually derived by applying a quadrature rule to an integral representation of $w$; many relying on the identity
\beql{eq:IntrowIntRep}
	w\frb{t}=\frac{t^{-1+\a}}{\Gf{\a}}=\frac{1}{\Gf{\a}\Gf{1-\a}}\int_0^\infty s^{-\a}\me^{-ts}\dif s
\eeq
for a starting point, where $\Gf{\cdot}$ denotes the gamma function.
A brief qualitative discussion comparing some methods is presented in \cite{KC_FDEs}.
More recently methods have been proposed based on a Gauss-Laguerre quadrature \cite{ZTB} and the application of the trapezoidal rule \cite{McLean} to an integral representation obtained  from \deq{eq:IntrowIntRep} by substituting the integration variable.

In this paper we propose a scheme based on the application of a composite Gauss-Jacobi quadrature to \deq{eq:IntrowIntRep}.
We divide the integration interval $(0,\infty)$ in \deq{eq:IntrowIntRep} into $K+1$ finite intervals, and an infinite interval.
By neglecting the integral over the infinite interval and approximating the integral over each of the remaining $K+1$ finite intervals by an appropriate Gauss-Jacobi quadrature with $J$ quadrature nodes, we obtain an approximation $S$ of $w_\del$ in $[0,T-\del]$.
This yields an approximation of the form \deq{eq:expSum} with real parameters $a_1,\ldots,a_P$, and $b_1,\ldots,b_P$, and $P=(K+1)J$.

The application of Gaussian quadratures for approximating \deq{eq:IntrowIntRep} seems natural due to the rapid convergence they provide for analytic functions, and since they yield real parameters $a_p$ and $b_p$, with $p=1,\ldots,P$.
Indeed, schemes based Gaussian quadratures have been explored, e.g., in \cite{jrl} and the references therein, \cite{FastCaputo} and \cite{ZTB}.
The scheme of \cite{jrl} is obtained by applying a composite Gauss quadrature to \deq{eq:IntrowIntRep} after a substitution of the integration variable.
The resulting approximation $S$ satisfies $|w\frb{t}-S\frb{t}|<\veps$, for $t\in[\del,\infty)$, with
\[
	P=O\frb{\rb{1-\a}^{-1}\rb{\log\rb{\a\veps}^{-1}+\log\del^{-1}}^2}\ .
\]
The scheme of \cite{FastCaputo} is also obtained by applying a composite Gauss-Jacobi to \deq{eq:IntrowIntRep}.
However, the latter scheme and the present one differ in the choice of the intervals of the composite rule.

The scheme proposed in this paper is inspired by \cite{KC_FDEs}.
The latter is based on a multipole approximation of the Laplace transform of $w_\del$.
This scheme has two main drawbacks compared to schemes based on Gaussian quadratures.
One drawback is the convergence rate of the quadrature rule.
The term corresponding the number $J$ of quadrature nodes associated with each interval in the composite integration rule decays as $3^{-J}$.
Another drawback of the scheme of \cite{KC_FDEs} is that it yields complex parameters $a_1,\ldots,a_P$ and $b_1,\ldots,b_P$.
This scheme, however, also has some useful properties:
the number $P$ of terms required to satisfy an error tolerance is bounded for $\a\in(0,1)$, the number $J$ of quadrature nodes in each interval of the composite rule is independent of $\a$, $\del$ and $T$, and the approximation has a modular structure which makes it convenient for use within adaptive step-size schemes \cite{KC_Adapt}.

The present scheme retains these properties, while benefits from the rapid convergence of a Gauss-Jacobi quadrature.
For the scheme of \cite{KC_FDEs}, we have an estimate of the form
\beql{Intro:RelL2Est}
	\norm{\rb{w_\del-S}\ast f}_{L^2(0,T)} \le \veps \norm{w_\del\ast f}_{L^2(0,T)}\ ,
\eeq
however here we do not attempt to derive a similar result.
The main result of this paper, given by Theorem \ref{thm:main}, provides an estimate of the relative pointwise error of the approximation.
More precisely, for $\a\in(0,1)$, and $0<\del<T$, the approximation \deq{eq:expSum} prescribed by scheme, with $P=(K+1)J$, $K\ge 0$ and $J\ge1$, satisfies
\beq
	|w_\del\frb{t}-S\frb{t}| \le C_{KJ}\frb{\a,\del T^{-1}}\, w_\del\frb{t}
\eeq
for $t\in[0,T-\del]$, where
\beq
	C_{KJ}\frb{\a,\eta}=CA_J+B_K\frb{\a,\eta}
\eeq
with $C>0$ independent of the parameters of the problem, and
\beq
	A_J=J\rb{3+\sqrt{8}}^{-2J}\ ,
	\qquad
	B_K\frb{\a,\eta}=\frac{\G\brb{1-\a,\eta 2^K}}{\Gf{1-\a}}\ ,
\eeq
where $\Gf{\cdot,\cdot}$ is the upper incomplete gamma function.
By estimating the upper incomplete gamma function, we have that for $\a\in(0,1)$, and an error tolerance $\veps$, there holds $|w_\del-S|\le \veps w_\del$ in $[0,T-\del]$, for
\beql{Intropm}
	K=O\frb{\log\del^{-1}T+\log\log\frac{1-\a}{\veps}}\ ,
	\qquad
	J =O\frb{\log\veps^{-1}}\ .
\eeq
To obtain our estimates we employ error estimates for Gauss-Jacobi quadratures for functions analytic in a circle.
This approach differers from the approaches of \cite{jrl} and \cite{FastCaputo}, however may be applied to analyze their constructions.

The rest of the paper is structured as follows.
In Section \ref{sec:overview} we present an overview of the method, discuss its incorporation into time stepping schemes and introduce some notation.
We discuss error estimates for Gauss-Jacobi quadratures for functions analytic in a circle in Section \ref{sec:GJQ}.
The approximation is stated explicitly in Section \ref{sec:KerComp}, where the main result of the paper, given by Theorem \ref{thm:main}, concerning the kernel compression scheme and its proof are presented.
Details on the time stepping schemes used are provided in Section \ref{sec:SteppScheme}, and numerical results are provided in Section \ref{sec:NumRes}.
We conclude with some remarks in Section \ref{sec:Conc}.

\section{Overview}\label{sec:overview}

In the following we discuss the main idea of kernel compression schemes, propose an approach for incorporating such a scheme into a fully discrete time stepping method and introduce the notation used subsequently.
For $\a>0$, let
\beq
	\cI^\a f\frb{t}=\frac{1}{\Gf{\a}}\int_0^{t} (t-s)^{-1+\a} f\frb{s}\dif s
\eeq
be the fractional $\a$-integral of $f$.
We split $\cI^\a f\frb{t+\del}$ into a local term
\beq
	\int_0^{\del} w\frb{\del-s} f\frb{t+s}\dif s
\eeq
and a history term
\beql{fracHist}
	w_\del\ast f\frb{t}=\int_0^{t} w_\del\frb{t-s} f\frb{s}\dif s\ ,
\eeq
where $t\ge0$, $\del>0$ and
\beq
	w_\del\frb{t}=w\frb{t+\del}\ ,
	\qquad
	w\frb{t}=\frac{t^{-1+\a}}{\Gf{\a}}\ .
\eeq

\subsection{Kernel compression}

Let $\a\in(0,1)$, $f:(0,T)\to\R^d$, and $\del\in(0,T)$.
Consider the history term \deq{fracHist} of $\cI^\a f\frb{t+\del}$, the fractional integral of $f$ at $t+\del$.
As \deq{fracHist} has the form of a Laplace convolution of $f$ with the kernel $w_\del$, we seek an approximation
\beq
	S\frb{t}=\sum_{p=1}^P b_p\me^{-a_pt}
\eeq
to $w_\del$.
Formally substituting $w_\del$ by $S$ in the convolution we obtain
\beq
	S\ast f\frb{t}=\sum_{p=1}^P b_p \int_0^t \me^{-a_p(t-s)}f\frb{s}\dif s\ ,
\eeq
which may be expressed in terms of the solution to an initial value problem for a standard ODE system.
For each $p=1,\ldots,P$, define
\beq
	\psi_p\frb{t}=\int_0^t \me^{-a_p (t-s)}f\frb{s}\dif s\ .
\eeq
To simplify notation we organize $\psi_1,\ldots,\psi_P$ as the columns of a matrix $\Psi=(\psi_p)$.
Thus the approximation $S\ast f$ of $w_\del\ast f$ is given by
\beq
	S\ast f=\sum_{p=1}^P b_p\psi_p=\Psi b\ ,
\eeq
where $b=(b_1,\ldots,b_P)^T$.
Observing that each $\psi_p$ is the solution to the ODE $\psi'=-a_p\psi+f$ satisfying $\psi\frb{0}=0$, we recover
\beql{auxODE}
	\Psi'=-\Psi A+f\One\ ,
	\qquad
	\Psi\frb{0}=0\ ,
\eeq
where $\One=(1,\ldots,1)$ is a row $P$-vector, and $A=\diag\frb{a_1,\ldots,a_P}$.

\subsection{Incorporation into a time stepping scheme}
\label{sec:TimeStepSchemeSetup}

While the main focus of this paper is the approximation of \deq{fracHist}, the goal of the method is to be incorporated into fully discrete time-stepping schemes.
In particular, schemes for initial value problems
\beql{IVPgen}
	D^\a u=f\frb{t,u}\ ,
	\qquad
	u\frb{0}=u_0
\eeq
in $(0,T)$, where $\a\in(0,1)$, $f:[0,T]\times\Pi\to\R^d$, $T>0$, $\Pi\sbt\R^d$ open, and $D^\a$ the Caputo $\a$-derivative, given by
\beq
	D^\a u=\cI^{1-\a} u' \ .
\eeq
Below is a setup for the application of the kernel compression scheme as a part of a fully discrete time-stepping method for \deq{IVPgen}.

Application of $\cI^\a$ to \deq{IVPgen} yields
\beql{voltEq}
	u=u_0+\cI^\a\frb{f\circ u}\ ,
\eeq
where $f\circ u\frb{t}=f\frb{t,u\frb{t}}$.
In fact, \deq{IVPgen} and \deq{voltEq} are equivalent, provided $f$ is continuous \cite{DietFord}.
If, in addition, $f$ is Lipschitz in $u$, then \deq{voltEq} has a unique solution in a neighborhood of $t=0$.
In the following we assume \deq{voltEq} has a unique solution in $[0,T]$.
A standard approach for the derivation of numerical methods for \deq{voltEq}, and thus for \deq{IVPgen}, is as follows.
Fix $t\ge0$ and $\delt>0$, and let $\del\in(0,\delt)$.
Owing to \deq{voltEq}, we have
\bseql{voltEquloc}
\begin{align}
	\label{uloc}
	u\frb{t+\del} &=\int_0^{\del} w\frb{\del-s} f\frb{t+s,u\frb{t+s}}\dif s
		+H\frb{t,\del} \\
	H\frb{t,\del} &=u_0+w_\del\ast\frb{f\circ u}\frb{t}\ .
\end{align}
\eseq
Observing that equation \deq{uloc} has the form
\beql{Uloc}
	U\frb{\del}=\cI^\a\frb{F\circ U}\frb{\del}+H\frb{\del}\ ,
\eeq
where $U\frb{\del}=u\frb{t+\del}$, $F\frb{\del,u}=f\frb{t+\del,u}$ and $H\frb{\cdot}=H\frb{t,\cdot}$, we find that two ingredients are required for the time-stepping scheme.
The first ingredient is a method for approximating Volterra equations \deq{Uloc} on short intervals $(0,\delt)$, assuming $H\frb{\cdot}$ is given.
This scheme is applied to \deq{uloc} to advance the numerical solution from $t$ to $t+\delt$, and perhaps compute approximations of $u$ at a small number of points in $(t,t+\delt)$.
In practice, schemes approximating \deq{Uloc} require $H\frb{\cdot}=H\frb{t,\cdot}$ to be evaluated at some points in $(0,\delt]$.
This requires computing the history term, and is therefore expensive to perform.
To reduce the costs of evaluating the history term, we require the second ingredient of the scheme -- the kernel compression scheme.
This scheme prescribes the approximation in terms of the matrix of auxiliary variables $\Psi$, defined as the solution to \deq{auxODE}.
Thus we also need a method for approximating \deq{auxODE}.
For that purpose, we may use an A-stable method.
The reason an A-stable method is required is that some of the $a_p$-s are large and positive.
The time stepping schemes tested in this paper are discussed in Section \ref{sec:SteppScheme}.

\section{Error estimates for Gauss-Jacobi quadratures}
\label{sec:GJQ}

Let $w\hpar{a,b}\frb{x}=(1-x)^a(1+x)^b$ with $a,b>-1$, and
\beql{eq:GJQuad}
	\int_{-1}^1 f\frb{x}w\hpar{a,b}\frb{x} \dif x =\sum_{k=1}^n \om_kf\frb{\xi_k} +E\hpar{a,b}_n\frb{f}
\eeq
the Gauss-Jacobi integration rule associated with the weight $w\hpar{a,b}$, where $E\hpar{a,b}_n\frb{f}$ is the error.
If $f$ is analytic in an open set containing the closure $\ol{\cB\frb{\ell}}$ of  the disc
\beq
	\cB\frb{\ell}=\{z\in\C\, :\, |z|<\ell\}
\eeq
with $\ell>1$, then \cite{GautschiVarga} the error $E\hpar{a,b}_n\frb{f}$ is given by
\beq
	E\hpar{a,b}_n\frb{f}=\frac{1}{2\pi i}\int_{|z|=\ell} K\hpar{a,b}_n\frb{z}f\frb{z}\dif z\ ,
\eeq
where
\beq
	K\hpar{a,b}_n=\frac{\Pi_n\hpar{a,b}}{P_n\hpar{a,b}}\ ,
\eeq
with $P_n\hpar{a,b}$ the Jacobi polynomial of degree $n$ normalized so that
\beq
	P_n\hpar{a,b}\frb{1}=\rb{\mat{n+a\\ n}}\ ,
\eeq
and $\Pi_n\hpar{a,b}$ given by
\beq
	\Pi_n\hpar{a,b}\frb{z}=\int_{-1}^1\frac{P_n\hpar{a,b}\frb{x}}{z-x}\, w\hpar{a,b}\frb{x}\dif x\ ,
	\qquad
	z\in\C\setminus[-1,1]\ .
\eeq
The function $\Pi_n\hpar{a,b}$ is related to the the Jacobi function of the second kind $Q\hpar{a,b}_n$ through \cite{transFunc2}
\beql{EqPiQ}
	\Pi\hpar{a,b}_n\frb{x}=2w\hpar{a,b}\frb{x} Q\hpar{a,b}_n\frb{x}\ .
\eeq
Owing to (28) of Section 10.7 of \cite{transFunc2}, for $z\in\C\setminus[-1,1]$, there holds
\beql{PI_IntRep}
	\Pi\hpar{a,b}_n\frb{z}
		=2^{-n}\int_{-1}^1\frac{\rb{1-x}^{n+a}\rb{1+x}^{n+b}}{\rb{z-x}^{n+1}} \dif x\ .
\eeq
In particular, for $\ell>1$, this yields $K_n\hpar{a,b}\frb{\ell}>0$.
Due to basic properties of $P_n\hpar{a,b}$, and \deq{PI_IntRep}, we obtain $K_n\hpar{a,b}\frb{z}=-K_n\hpar{b,a}\frb{-z}$.
This combined with Theorem 3.1 of \cite{GautschiVarga} yield
\beq
	\max_{|z|=\ell} \abs{K\hpar{a,b}_n\frb{z}}
		=\begin{cases}
			K\hpar{a,b}_n\frb{\ell} & b\ge a \\[0.5em]
			K\hpar{b,a}_n\frb{\ell} & a\ge b
		\end{cases}\ .
\eeq
As a result we have the following:

\begin{corollary}\label{Cor:GJ_ErrEst}
Let $a,b>-1$, $A=\min(a,b)$, and $B=\max(a,b)$.
If $f$ is analytic in an open set containing the closure $\ol{\cB\frb{\ell}}$ of $\cB\frb{\ell}$, with $\ell>1$, then the error $E\hpar{a,b}_n\frb{f}$ of the Gauss-Jacobi quadrature rule \deq{eq:GJQuad} satisfies
\beq
	\abs{E\hpar{a,b}_n\frb{f}} \le \ell K\hpar{A,B}_n\frb{\ell} \max_{|z|=\ell} \abs{f\frb{z}}\ .
\eeq
\end{corollary}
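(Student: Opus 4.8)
The plan is to bound the contour-integral representation of the error directly by an $ML$-type estimate. Starting from
\[
	E\hpar{a,b}_n\frb{f}=\frac{1}{2\pi i}\int_{|z|=\ell} K\hpar{a,b}_n\frb{z}f\frb{z}\dif z\ ,
\]
I would parametrize the circle by $z=\ell\me^{i\theta}$, $\theta\in[0,2\pi]$, so that $|\dif z|=\ell\dif\theta$. Applying the triangle inequality inside the integral then gives
\[
	\abs{E\hpar{a,b}_n\frb{f}} \le \frac{\ell}{2\pi}\int_0^{2\pi}\abs{K\hpar{a,b}_n\frb{\ell\me^{i\theta}}}\,\abs{f\frb{\ell\me^{i\theta}}}\dif\theta\ .
\]

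The next step is to replace both factors in the integrand by their maxima over the circle $|z|=\ell$, pull these constants out of the integral, and use $\int_0^{2\pi}\dif\theta=2\pi$ to cancel the $1/(2\pi)$. This produces
\[
	\abs{E\hpar{a,b}_n\frb{f}} \le \ell\,\max_{|z|=\ell}\abs{K\hpar{a,b}_n\frb{z}}\,\max_{|z|=\ell}\abs{f\frb{z}}\ .
\]
Both maxima are finite: since $f$ is analytic in an open neighbourhood of $\ol{\cB\frb{\ell}}$, it is continuous on the compact circle, and $K\hpar{a,b}_n$ has no pole there because $P\hpar{a,b}_n$ has no zeros outside $[-1,1]$.

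The final step is purely bookkeeping. I would invoke the formula for $\max_{|z|=\ell}\abs{K\hpar{a,b}_n\frb{z}}$ already established above from the symmetry $K\hpar{a,b}_n\frb{z}=-K\hpar{b,a}_n\frb{-z}$ together with Theorem 3.1 of \cite{GautschiVarga}. Writing $A=\min(a,b)$ and $B=\max(a,b)$, the two cases of that formula collapse to a single expression: when $b\ge a$ we have $(A,B)=(a,b)$ so the case value $K\hpar{a,b}_n\frb{\ell}$ equals $K\hpar{A,B}_n\frb{\ell}$, and when $a\ge b$ we have $(A,B)=(b,a)$ so the case value $K\hpar{b,a}_n\frb{\ell}$ again equals $K\hpar{A,B}_n\frb{\ell}$. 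Substituting $\max_{|z|=\ell}\abs{K\hpar{a,b}_n\frb{z}}=K\hpar{A,B}_n\frb{\ell}$ into the previous display yields the claim.

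There is essentially no analytic obstacle here, since the genuinely hard content — identifying where $\abs{K\hpar{a,b}_n}$ attains its maximum on the circle — is supplied by the cited Gautschi--Varga result. The only point requiring care is checking that the case distinction $b\ge a$ versus $a\ge b$ in that formula is matched exactly by the $\min/\max$ relabelling, so that both cases can be written uniformly as $K\hpar{A,B}_n\frb{\ell}$; this is the verification I would carry out most explicitly.
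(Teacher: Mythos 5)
Your proposal is correct and follows essentially the same route as the paper: the paper presents the corollary as an immediate consequence (``As a result we have the following'') of the contour-integral representation of $E\hpar{a,b}_n\frb{f}$, the standard $ML$ bound on the circle $|z|=\ell$, and the displayed formula $\max_{|z|=\ell}\abs{K\hpar{a,b}_n\frb{z}}=K\hpar{a,b}_n\frb{\ell}$ or $K\hpar{b,a}_n\frb{\ell}$ according as $b\ge a$ or $a\ge b$, which is exactly the min/max relabelling you verify. Your added remarks on the finiteness of the maxima and the absence of poles of $K\hpar{a,b}_n$ on the circle are harmless elaborations of what the paper leaves implicit.
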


To estimate the error, we may estimate $K\hpar{a,b}_n\frb{\ell}$ for $\ell>1$.
The estimate below is based on asymptotic expansions derived in \cite{UnifAsymptJacobi}.

\begin{theorem}\label{thm:GJ_ErrKern}
Let $\ell_\ast>1$, and $D\sbt[-1,\infty)^2$ compact.
Then, at the limit $n\to\infty$, there holds
\beql{eq:KAsympEquiv}
	K\hpar{a,b}_n\frb{\ell} \sim 2\pi\
		\frac{(\ell-1)^{a}(\ell+1)^{b}}{\rb{\ell+\sqrt{\ell^2-1}}^{a+b}}\, \rb{\ell+\sqrt{\ell^2-1}}^{-(2n+1)}
\eeq
uniformly with respect to $(a,b,\ell)\in D\times[\ell_\ast,\infty)$.
Moreover, there exists a constant $C>0$ such that the estimate
\beql{eq:KabEst}
	K\hpar{a,b}_n\frb{\ell} \le
		C \rb{\ell+\sqrt{\ell^2-1}}^{-(2n+1)}
\eeq
holds for every $n\ge 2$, and $(a,b,\ell)\in D\times[\ell_\ast,\infty)$.
If, in addition $(-1,-1)\notin D$, then there exists a constant $C$, such that \deq{eq:KabEst} holds for every $n\ge1$ and $(a,b,\ell)\in D\times[\ell_\ast,\infty)$.
\end{theorem}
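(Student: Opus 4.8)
The plan is to reduce the estimate on $K\hpar{a,b}_n\frb{\ell}$ to the known uniform asymptotics of the Jacobi polynomials and the Jacobi functions of the second kind. Using \deq{EqPiQ} I would write
\[
	K\hpar{a,b}_n\frb{\ell}=\frac{\Pi_n\hpar{a,b}\frb{\ell}}{P_n\hpar{a,b}\frb{\ell}}
		=\frac{2w\hpar{a,b}\frb{\ell}\,Q\hpar{a,b}_n\frb{\ell}}{P_n\hpar{a,b}\frb{\ell}}\ ,
\]
so that the task becomes controlling the ratio $Q\hpar{a,b}_n\frb{\ell}/P_n\hpar{a,b}\frb{\ell}$ against the weight $2w\hpar{a,b}\frb{\ell}$, whose relevant factor for $\ell>1$ is $\rb{\ell-1}^a\rb{\ell+1}^b$ (consistently with $K\hpar{a,b}_n\frb{\ell}>0$). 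Throughout I set $\rho=\ell+\sqrt{\ell^2-1}>1$, the image of $\ell$ under the conformal map onto the exterior of the unit disc, and recall that $\rho^{-1}=\ell-\sqrt{\ell^2-1}$.

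First I would insert the asymptotic expansions of \cite{UnifAsymptJacobi}, which hold uniformly for $(a,b)$ in the compact set $D$. For $\ell>1$ the polynomial $P_n\hpar{a,b}\frb{\ell}$ grows and $Q\hpar{a,b}_n\frb{\ell}$ decays, with $\rho$-exponents $n+\tfrac{a+b+1}{2}$ and its negative respectively, each expansion carrying an algebraic prefactor in $\ell-1$, $\ell+1$, $\rho$ and a common factor $n^{-1/2}$. Forming the quotient, the powers of $n$ cancel, the $\rho$-exponents combine to $-\rb{2n+1}-\rb{a+b}$, and the surviving $\ell$-dependent factors, multiplied by $2w\hpar{a,b}\frb{\ell}$, assemble into the prefactor $2\pi\,\rb{\ell-1}^a\rb{\ell+1}^b$ of \deq{eq:KAsympEquiv}; this would establish the equivalence. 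Its uniformity in $(a,b,\ell)\in D\times[\ell_\ast,\infty)$ is inherited from that of the expansions, the role of $\ell_\ast>1$ being to keep $\ell$ bounded away from the cut $[-1,1]$, where the expansions degenerate.

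To pass from the equivalence to the bound \deq{eq:KabEst}, I would write
\[
	K\hpar{a,b}_n\frb{\ell}
		=\left[\frac{K\hpar{a,b}_n\frb{\ell}}{g_n\frb{a,b,\ell}}\right]
		\,2\pi\,\frac{\rb{\ell-1}^a\rb{\ell+1}^b}{\rho^{a+b}}\,\rho^{-\rb{2n+1}}\ ,
\]
where $g_n\frb{a,b,\ell}$ denotes the right-hand side of \deq{eq:KAsympEquiv}. By the uniform equivalence the bracketed factor is bounded, uniformly in $(a,b,\ell)$, for all $n$ beyond some $n_0$. The algebraic prefactor $\rb{\ell-1}^a\rb{\ell+1}^b\rho^{-\rb{a+b}}$ is continuous on $D\times[\ell_\ast,\infty)$ and tends to the finite limit $2^{-\rb{a+b}}$ as $\ell\to\infty$, hence is bounded there; this settles \deq{eq:KabEst} for $n\ge n_0$. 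For the finitely many remaining indices I would bound $K\hpar{a,b}_n\frb{\ell}\rho^{2n+1}$ directly: it is continuous in $(a,b,\ell)$, and the representation \deq{PI_IntRep} gives $\Pi_n\hpar{a,b}\frb{\ell}\sim c_n\frb{a,b}\ell^{-\rb{n+1}}$ as $\ell\to\infty$ while $P_n\hpar{a,b}\frb{\ell}\sim k_n\frb{a,b}\ell^{n}$, with $k_n$ the leading coefficient, so that $K\hpar{a,b}_n\frb{\ell}\rho^{2n+1}$ has the finite limit $2^{2n+1}c_n\frb{a,b}/k_n\frb{a,b}$; boundedness over $D\times[\ell_\ast,\infty)$ then follows.

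The distinction between $n\ge2$ and $n\ge1$ comes precisely from the coefficient $k_n\frb{a,b}=2^{-n}\G\frb{2n+a+b+1}\big/\rb{n!\,\G\frb{n+a+b+1}}$. For $a,b\ge-1$ and $n\ge2$ one has $n+a+b+1\ge1$, so $\G\frb{n+a+b+1}$ has no pole and $k_n\frb{a,b}$ stays bounded away from zero on $D$; the small-$n$ limit above is then uniform and \deq{eq:KabEst} holds for all $n\ge2$. At $n=1$ the quantity $n+a+b+1$ vanishes exactly at $(a,b)=(-1,-1)$, where $k_1$ vanishes, $P_1\hpar{-1,-1}$ degenerates, and $K\hpar{-1,-1}_1$ is unbounded; excluding $(-1,-1)$ from $D$ removes this single obstruction and restores the bound for $n\ge1$. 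I expect the main difficulty to lie in the uniformity of the Jacobi asymptotics in the \emph{unbounded} parameter $\ell$: one must check that the error terms in the expansions of \cite{UnifAsymptJacobi} are controlled uniformly as $\ell\to\infty$ simultaneously with $(a,b)\in D$, and not merely for $\ell$ in a bounded subinterval of $(1,\infty)$.
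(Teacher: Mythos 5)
Your proposal is correct and follows essentially the same route as the paper: both rest on the uniform asymptotic expansions of $P\hpar{a,b}_n$ and $\Pi\hpar{a,b}_n$ (equivalently $Q\hpar{a,b}_n$) from \cite{UnifAsymptJacobi}, form the quotient to obtain \deq{eq:KAsympEquiv}, handle the finitely many small $n$ by a separate large-$\ell$ limit argument, and trace the exclusion of $(-1,-1)$ to the degeneracy of $P_1\hpar{-1,-1}$. The only step you gloss over is that the constant $2\pi$ arises from a Stirling-type computation of the limit of the ratio of the $n$-dependent coefficients in the two expansions, and you correctly flag the genuine technical point (uniformity of the expansions in the unbounded variable $\ell$ jointly with $(a,b)\in D$), which the paper addresses by invoking Theorem A of \cite{Olver2}.
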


\begin{remark}
Estimates \deq{eq:KAsympEquiv} and \deq{eq:KabEst} are obtained by manipulating the leading terms of asymptotic expansions \cite{UnifAsymptJacobi} of $P\hpar{a,b}_n\frb{\ell}$ and $\Pi\hpar{a,b}_n\frb{\ell}$.
The latter two capture the asymptotic behavior of their respective functions for $\ell$ fixed and large $n$, and for $n$ fixed and large $\ell$ and thus may yield estimates of $K\hpar{a,b}_n\frb{\ell}$ that are, in some cases, more accurate than \deq{eq:KAsympEquiv} and \deq{eq:KabEst}.
However, in this paper, we only require estimates of $K\hpar{a,b}_n\frb{\ell}$ with $\ell\approx 3$, and \deq{eq:KabEst} suits that purpose.
\end{remark}

\begin{proof}
We recall the leading term approximations of the asymptotic expansions \cite{UnifAsymptJacobi} of $P\hpar{a,b}_n$ and $\Pi\hpar{a,b}_n$.
For $a,b>-1$, at the limit $n\to\infty$, there hold
\beql{eq:PAsympEquiv0}
	P\hpar{a,b}_n\frb{\ell} \sim \rho_n\frb{a,b}
		\frac{\rb{\ell+\sqrt{\ell^2-1}}^{n+\ka/2}}{(\ell-1)^{(2a+1)/4}(\ell+1)^{(2b+1)/4}}
\eeq
and
\beql{eq:PiAsympEquiv0}
	\Pi\hpar{a,b}_n\frb{\ell} \sim \pi_n\frb{a,b}
		\frac{(\ell-1)^{(2a-1)/4}(\ell+1)^{(2b-1)/4}}{\rb{\ell+\sqrt{\ell^2-1}}^{n+\ka/2}}
\eeq
uniformly with respect to $\ell\in[\ell_\ast,\infty)$, where $\ka=a+b+1$ (notice the different notation),
\beq
	\rho_n\frb{a,b} = \frac{\Gf{2n+\ka}}{2^{2n+\ka/2} \Gf{n+1}\Gf{n+\ka}}\ ,
\eeq
and
\beq
	\pi_n\frb{a,b} =\frac{2^{2n+3\ka/2} \Gf{n+a+1}\Gf{n+b+1}}{\Gf{2n+a+b+2}}\ .
\eeq
These results are obtained in \cite{UnifAsymptJacobi} by transforming the differential equation satisfied by $P\hpar{a,b}_n$ and $Q\hpar{a,b}_n$ into an equation of the form
\[
	W''\frb{z}=\rb{\rb{2n+\ka}^2+F\frb{a,b,z}}W\frb{z}\ ,
\]
and applying Theorem A of \cite{Olver1}.
In fact, a similar argument relying on Theorem A of \cite{Olver2}, instead of Theorem A of \cite{Olver1}, yields that for $n\to\infty$, each of the asymptotic equivalences \deq{eq:PAsympEquiv0} and \deq{eq:PiAsympEquiv0} holds uniformly with respect to $(a,b,\ell)\in D\times[\ell_\ast,\infty)$.
Therefore, we have
\beql{eq:KAsympEquiv0}
	K\hpar{a,b}_n\frb{\ell}\sim \frac{\pi_n\frb{a,b}}{\rho_n\frb{a,b}}\,
		\frac{(\ell-1)^{a}(\ell+1)^{b}}{\rb{\ell+\sqrt{\ell^2-1}}^{a+b}}\,
		\rb{\ell+\sqrt{\ell^2-1}}^{-(2n+1)}
\eeq
uniformly with respect to $(a,b,\ell)\in D\times[\ell_\ast,\infty)$, at the limit $n\to\infty$.
To simplify the right hand side of \deq{eq:KAsympEquiv0}, we use that, by Stirling's approximation, there holds
\beq
	\frac{\Gf{x+a}\Gf{x+b}}{\Gf{2x+c}}
			\sim 2^{1-c}\sqrt{\pi}\, x^{a+b-c-1/2}\, 2^{-2x}\ ,
	\qquad
	x\to\infty\ ,
\eeq
uniformly for $(a,b,c)$ bounded, and therefore the limit
\beq
	\lim_{n\to\infty}\frac{\pi_n\frb{a,b}}{\rho_n\frb{a,b}} =2\pi
\eeq
exists uniformly in $D$.
As a result, we have \deq{eq:KAsympEquiv}, at the limit $n\to\infty$, uniformly in $D\times[\ell_\ast,\infty)$.

It is now left to show that there exists a constant $C>0$ such that \deq{eq:KabEst} holds for each $n\ge 2$, and $(a,b,\ell)\in D\times[\ell_\ast,\infty)$, and that if $(-1,-1)\notin D$, then the same holds true for $n=1$.
It is easy to verify that each of the asymptotic equivalences \deq{eq:PAsympEquiv0} with $n\ge2$, and \deq{eq:PiAsympEquiv0} with $n\ge1$, holds at the limit $\ell\to\infty$ uniformly with respect to $(a,b)\in D$.
The restriction $(-1,-1)\notin D$ is sufficient to ensure that the above is also true of \deq{eq:PAsympEquiv0} with $n=1$.
Therefore, the equivalence \deq{eq:KAsympEquiv0} also holds for each $n\ge 2$ (and for $n=1$, if $(-1,-1)\notin D$) at the limit $\ell\to\infty$, uniformly in $D$.
In particular, this yields that for each $n$, the function
\beq
	(a,b,\ell)\longmapsto K\hpar{a,b}_n\frb{\ell} \rb{\ell+\sqrt{\ell^2-1}}^{2n+1}
\eeq
is bounded in $D\times[\ell_\ast,\infty)$.
This, combined with \deq{eq:KAsympEquiv}, which holds uniformly with respect to $(a,b,\ell)\in D\times[\ell_\ast,\infty)$, at the limit $n\to\infty$, yield the conclusion.
\end{proof}

\section{Gauss-Jacobi kernel compression}
\label{sec:KerComp}

Let $\a\in(0,1)$, and $T>\del>0$.
To approximate the kernel $w_\del$ of the history term of the fractional integral in $[0,T-\del]$ by a sum of exponentials, we apply a composite Gauss-Jacobi quadrature to the following integral representation of $w$
\beql{wIntRep}
	w\frb{t}=\frac{t^{-1+\a}}{\Gf{\a}}=C\frb{\a}\int_0^\infty s^{-\a}\me^{-ts}\dif s\ ,
\eeq
for $t\in[\del,T]$, where
\beq
	C\frb{\a}=\frac{1}{\Gf{\a}\Gf{1-\a}}=\frac{\sin\frb{\pi\a}}{\pi}\ .
\eeq
That is, we choose non-overlapping intervals
\beql{eq:Intervals}
	\brb{c_{k}-r_k,c_k+r_k}\ ,
	\qquad
	k=0,\ldots,K\ ,
\eeq
split the integral on the right hand side of \deq{wIntRep} into a sum
\beql{wIntRep_Split}
	w\frb{t}= \sum_{k=0}^K \int_{c_{k}-r_{k}}^{c_{k}+r_{k}} +\int_{c_K+r_K}^\infty\ ,
\eeq
neglect the integral over the infinite interval, and approximate each of the remaining terms by an appropriate Gauss-Jacobi quadrature.
Thus, we obtain an approximation $w\frb{t}\approx S\frb{t-\del}$, for $t\in[\del,T]$, where
\bseql{eq:wdelApprox}
\beq
	S\frb{t}=\sum_{k=0}^K\sum_{j=1}^J b_{kj}\me^{-a_{kj}t}\ ,
\eeq
with
\beq
	a_{0j} =r_{0}\xi\hpar{0,-\a}_j +c_0\ ,
	\quad
	b_{0j} =C\frb{\a}\me^{-\del a_{0j}} r_{0}^{1-\a}\om\hpar{0,-\a}_j\ ,
	\qquad
	j=1,\ldots,J\ ,
\eeq
and for each $k=1,\ldots,K$,
\beq
	a_{kj} =r_{k}\xi\hpar{0,0}_j+c_k\ ,
	\quad
	b_{kj} =C\frb{\a} \me^{-\del a_{kj}} a_{kj}^{-\a} r_{k}\om\hpar{0,0}_j\ ,
	\qquad
	j=1,\ldots,J\ .
\eeq
Here $\xi\hpar{a,b}_1,\ldots,\xi\hpar{a,b}_J$ and $\om\hpar{a,b}_1,\ldots,\om\hpar{a,b}_J$ denote the Gauss-Jacobi quadrature nodes and weights, respectively, associated with $w\hpar{a,b}\frb{x}=(1-x)^{a}(1+x)^{b}$.
The centers $c_1,\ldots,c_K$ and radii $r_1,\ldots,r_K$ of the intervals \deq{eq:Intervals} are chosen so the resulting errors associated with the different intervals converge at similar rates.
This requirement yields
\beq
	r_0=c_0=\frac{1}{2T}\ ,
\eeq
and
\beq
	r_k=\frac{1}{2T} 2^{k-1}\ ,
	\quad
	c_k=3r_k=\frac{3}{2T}\, 2^{k-1}\ ,
	\qquad
	k=1,\ldots,K\ .
\eeq
\eseq

In the following, $C$, $C_0$, $C_1$ and so on, denote generic constants that are independent of the parameters of the problem, and may have different values in different places.
For the approximation above we have the following result.

\begin{theorem}\label{thm:main}
Let $\a\in(0,1)$, $T>0$, $\del\in(0,T)$, and $S$ given by \deq{eq:wdelApprox}.
There exists a constant $C>0$, independent of $\a$, $\del$ or $T$, such that for each $t\in[0,T-\del]$, the following estimate holds
\bseql{eq:mainErrEst}
\beq
	\abs{w\frb{t+\del}-S\frb{t}}\le C_{KJ}\frb{\a, \del T^{-1}}\, w\frb{t+\del}\ ,
\eeq
where
\beq
	C_{KJ}\frb{\a,\eta}=CA_J+B_K\frb{\a,\eta}
\eeq
with
\beql{eq:AJ_BK}
	A_J=J\rb{3+\sqrt{8}}^{-2J}\ ,
	\qquad
	B_K\frb{\a,\eta}=\frac{\G\brb{1-\a ,\eta 2^K}}{\Gf{1-\a}}\ .
\eeq
\eseq
\end{theorem}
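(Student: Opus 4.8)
The plan is to start from the representation $w(t+\delta)=C(\alpha)\int_0^\infty s^{-\alpha}e^{-(t+\delta)s}\,ds$ together with the splitting \deq{wIntRep_Split}: by construction $S(t)$ is obtained by replacing each finite-interval integral with its Gauss--Jacobi approximation and discarding the tail over $(c_K+r_K,\infty)$. Hence, by the triangle inequality, $|w(t+\delta)-S(t)|$ is at most the neglected tail $C(\alpha)\int_{c_K+r_K}^\infty s^{-\alpha}e^{-(t+\delta)s}\,ds$ plus the sum of the $K+1$ quadrature errors. It then suffices to bound the tail by $B_K(\alpha,\delta T^{-1})\,w(t+\delta)$ and the quadrature sum by $CA_J\,w(t+\delta)$.

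For the tail, I would first note that $c_K+r_K=2^K/T$ for every $K\ge0$. Substituting $u=(t+\delta)s$ turns the tail into $C(\alpha)(t+\delta)^{\alpha-1}\Gamma(1-\alpha,(t+\delta)2^K/T)$; dividing by $w(t+\delta)=(t+\delta)^{\alpha-1}/\Gamma(\alpha)$ and using $C(\alpha)\Gamma(\alpha)=1/\Gamma(1-\alpha)$ gives the relative tail error $\Gamma(1-\alpha,(t+\delta)2^K/T)/\Gamma(1-\alpha)$. Since the upper incomplete gamma function decreases in its second argument and $t+\delta\ge\delta$, this is at most $B_K(\alpha,\delta T^{-1})$.

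For the quadrature errors I would map each interval to $[-1,1]$ via $s=r_kx+c_k$. For $k\ge1$ the integrand becomes $g_k(x)=(r_kx+c_k)^{-\alpha}e^{-(t+\delta)(r_kx+c_k)}$, integrated against $w^{(0,0)}$; because $c_k=3r_k$, the only singularity of $g_k$ lies at $z=-3$, so $g_k$ is analytic on every disc $|z|\le\ell$ with $\ell<3$. I would apply Corollary~\ref{Cor:GJ_ErrEst} and Theorem~\ref{thm:GJ_ErrKern} with the radius $\ell=\ell_J=3-1/J$, which approaches the singularity as $J$ grows. The decisive point is to estimate $\max_{|z|=\ell}|g_k(z)|$ \emph{without} discarding the exponential: at $z=-\ell$ one obtains $|g_k|\le r_k^{-\alpha}(3-\ell)^{-\alpha}e^{-(t+\delta)r_k(3-\ell)}$. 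For $k=0$ the factor $(1+x)^{-\alpha}$ is instead absorbed into the Jacobi weight $w^{(0,-\alpha)}$, leaving the entire function $f_0(x)=e^{-(t+\delta)r_0(x+1)}$; since $(t+\delta)r_0\le 1/2$, its maximum on $|z|=\ell$ is bounded by an absolute constant, so this term contributes at most $CA_J\,w(t+\delta)$ directly.

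The main obstacle, and the reason the estimate is uniform in $\alpha$, is the summation over $k\ge1$. Writing $m_k=(t+\delta)r_k$ and $\beta=3-\ell=1/J$, the relative error of interval $k$ is controlled by $C\,\ell K^{(0,0)}_J(\ell)\,\beta^{-\alpha}\Gamma(1-\alpha)^{-1}\,m_k^{1-\alpha}e^{-\beta m_k}$, and the $m_k$ form a geometric sequence of ratio $2$. A crude bound on $\sum_k r_k^{1-\alpha}$ grows like $2^{K(1-\alpha)}$ and is useless; instead I would retain the exponential and compare the geometric sum to $\int_0^\infty m^{-\alpha}e^{-\beta m}\,dm=\Gamma(1-\alpha)\beta^{\alpha-1}$, exploiting the unimodality of $m\mapsto m^{1-\alpha}e^{-\beta m}$ to obtain $\sum_k m_k^{1-\alpha}e^{-\beta m_k}\le C\,\Gamma(1-\alpha)\beta^{\alpha-1}$ with $C$ absolute. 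This cancels $\Gamma(1-\alpha)$ exactly, and $\beta^{-\alpha}\cdot\beta^{\alpha-1}=\beta^{-1}=J$ yields the factor $J$ in $A_J$. It then remains to check $\ell K^{(0,0)}_J(\ell)\le C(3+\sqrt{8})^{-2J}$: Theorem~\ref{thm:GJ_ErrKern} gives $K^{(0,0)}_J(\ell)\le C(\ell+\sqrt{\ell^2-1})^{-(2J+1)}$, and the identity $\frac{d}{d\ell}\log(\ell+\sqrt{\ell^2-1})=1/\sqrt{\ell^2-1}$ shows $2J\log\frac{3+\sqrt{8}}{\ell_J+\sqrt{\ell_J^2-1}}\le 2/\sqrt{3}$, so the rate at $\ell_J$ differs from $(3+\sqrt{8})^{-2J}$ only by an absolute constant. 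Combining the tail and quadrature bounds gives \deq{eq:mainErrEst}.
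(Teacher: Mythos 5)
Your proposal is correct and follows essentially the same route as the paper: the same splitting into a neglected tail plus $K+1$ Gauss--Jacobi errors, the same use of Corollary~\ref{Cor:GJ_ErrEst} and Theorem~\ref{thm:GJ_ErrKern} while retaining the exponential factor $e^{-\beta m_k}$, the same comparison of the geometric sum with $\int_0^\infty m^{-\alpha}e^{-\beta m}\,dm$ (the paper's lower-Riemann-sum step) to cancel $\Gamma(1-\alpha)$, and the same choice of a contour radius at distance $O(1/J)$ from the singularity at $z=-3$ to produce $A_J=J(3+\sqrt{8})^{-2J}$. The only deviations are harmless: you bound the tail by direct substitution and monotonicity of $\Gamma(1-\alpha,\cdot)$ instead of the symmetrization argument of Lemma~\ref{lem:W2approx}, and you take $\ell=3-1/J$ rather than the exact minimizer of Lemma~\ref{lem:estRJlJ}, both of which yield the same bounds up to absolute constants.
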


The proof of Theorem \ref{thm:main} is at the end of this section.
To simplify the analysis, the following two lemmas treat the case where $\del=1$.
The general result, given by Theorem \ref{thm:main}, is obtained as a corollary by rescaling the problem.

\begin{lemma}\label{lem:W1approx}
There exists a constant $C>0$ such that for each $\a\in(0,1)$, $T>1$, $K\ge 0$, $J\ge 1$, and $t\in[1,T]$, $S$ given by \deq{eq:wdelApprox} with $\del=1$ satisfies
\beql{eq:W1_est}
	\babs{W_1\frb{t}-S\frb{t-1}}\le C A_J\, w\frb{t}\ ,
\eeq
where $A_J$ is given by \deq{eq:AJ_BK}, and
\beql{eq:W1}
	W_1\frb{t}=C\frb{\a}\int_0^{2^{K}/T} s^{-\a}\me^{-ts}\dif s\ .
\eeq
\end{lemma}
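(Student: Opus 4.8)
The plan is to recognize $S\frb{t-1}$ as \emph{exactly} the composite Gauss--Jacobi quadrature approximation of the integral $W_1\frb{t}$ in \deq{eq:W1}, and then to bound the total quadrature error by summing the contributions of the individual intervals. Concretely, I would carry out on each interval the affine change of variables $s=c_k+r_k x$ mapping $\brb{c_k-r_k,c_k+r_k}$ onto $[-1,1]$. For $k=0$ (where $c_0=r_0$) this turns $\int_0^{1/T}s^{-\a}\me^{-ts}\dif s$ into $r_0^{1-\a}\int_{-1}^1\rb{1+x}^{-\a}\me^{-tr_0(1+x)}\dif x$, i.e. a Gauss--Jacobi rule with weight $w\hpar{0,-\a}$ applied to the entire function $f_0\frb{x}=\me^{-tr_0(1+x)}$. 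For $k\ge1$ the factor $s^{-\a}$ stays away from its singularity, so the interval contributes $r_k\int_{-1}^1 f_k\frb{x}\dif x$ with the Legendre weight $w\hpar{0,0}$ and $f_k\frb{x}=\rb{c_k+r_k x}^{-\a}\me^{-t(c_k+r_k x)}$. Since $\del=1$, the factor $\me^{-a_{kj}}$ in the coefficients $b_{kj}$ of \deq{eq:wdelApprox} cancels the $\me^{a_{kj}}$ produced by evaluating $S$ at $t-1$, so $S\frb{t-1}$ is precisely the sum of these quadrature sums. Hence $W_1\frb{t}-S\frb{t-1}=C\frb{\a}\sum_{k=0}^K E_k\frb{t}$, where $E_0=r_0^{1-\a}E\hpar{0,-\a}_J\frb{f_0}$ and $E_k=r_k E\hpar{0,0}_J\frb{f_k}$ for $k\ge1$. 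Throughout I would use the identity $C\frb{\a}\Gf{1-\a}\,t^{-1+\a}=w\frb{t}$ to convert absolute bounds into the required relative bound.

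For the $k=0$ term, since $f_0$ is entire I would apply Corollary \ref{Cor:GJ_ErrEst} and Theorem \ref{thm:GJ_ErrKern} with the fixed radius $\ell=3$ and the compact set $D=\{(-\a,0):\a\in[0,1]\}$, which excludes $(-1,-1)$ so the estimate holds for all $J\ge1$ uniformly in $\a$. Using $\max_{|z|=3}\abs{f_0\frb{z}}=\me^{2tr_0}\le\me$ (valid because $tr_0\le1/2$ for $t\le T$), this term is controlled by $C\rb{3+\sqrt{8}}^{-(2J+1)}w\frb{t}\le C A_J w\frb{t}$; the $J$ prefactor in $A_J$ is not even needed here.

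The heart of the argument is the $k\ge1$ sum. Here $f_k$ has its only singularity at the branch point $x=-c_k/r_k=-3$, so it is analytic merely in the open disc $|x|<3$ and I must work with a radius $\ell\in(1,3)$. I would estimate $\max_{|z|=\ell}\abs{f_k\frb{z}}\le\rb{r_k(3-\ell)}^{-\a}\me^{-tr_k(3-\ell)}$, both factors peaking at $z=-\ell$, invoke Theorem \ref{thm:GJ_ErrKern} for $K\hpar{0,0}_J\frb{\ell}$, and sum over $k$. Because the radii $r_k$ are spaced geometrically with ratio $2$, the sum $\sum_k r_k^{1-\a}\me^{-tr_k(3-\ell)}$ is comparable, up to a universal constant and its unimodal maximum, to $\int_0^\infty r^{-\a}\me^{-t(3-\ell)r}\dif r=\Gf{1-\a}\rb{t(3-\ell)}^{-1+\a}$. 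Combined with $C\frb{\a}\Gf{1-\a}t^{-1+\a}=w\frb{t}$, this collapses the whole $k\ge1$ contribution to a bound of the form $C\,\frac{\ell}{3-\ell}\rb{\ell+\sqrt{\ell^2-1}}^{-(2J+1)}w\frb{t}$.

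The main obstacle, and the step that pins down the precise form of $A_J$, is the optimization over $\ell$: the factor $\ell/(3-\ell)$ pushes $\ell$ away from $3$, while $\rb{\ell+\sqrt{\ell^2-1}}^{-(2J+1)}$ rewards $\ell$ close to $3$, where $\ell+\sqrt{\ell^2-1}$ reaches $3+\sqrt{8}$. Taylor expanding $\ell+\sqrt{\ell^2-1}$ about $\ell=3$ and choosing $\ell=3-O(1/J)$, so that the $(1-O(1/J))^{-(2J+1)}$ correction remains bounded while $\ell/(3-\ell)=O(J)$, balances the two effects and produces exactly $C J\rb{3+\sqrt{8}}^{-2J}=CA_J$. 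Adding the $k=0$ bound then yields $\abs{W_1\frb{t}-S\frb{t-1}}\le CA_J\,w\frb{t}$, with a constant independent of $\a$, $T$, $K$ and $J$.
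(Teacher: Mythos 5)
Your proposal is correct and follows essentially the same route as the paper's proof: the same interval-by-interval change of variables, the same application of Corollary \ref{Cor:GJ_ErrEst} and Theorem \ref{thm:GJ_ErrKern} with $|3+z|\ge 3-\ell$, the same comparison of the geometric sum over $k$ with $\int_0^\infty r^{-\a}\me^{-t(3-\ell)r}\dif r$ to produce the factor $w\frb{t}$, and the same balancing of $\ell/(3-\ell)$ against $\rb{\ell+\sqrt{\ell^2-1}}^{-2J}$ (which the paper delegates to Lemma \ref{lem:estRJlJ}). The only cosmetic deviation is fixing $\ell=3$ for the entire $k=0$ integrand rather than carrying a variable $\ell$ through that term as well, which is equally valid.
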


\begin{proof}
Let $t\in[1,T]$, and
\beq
	W_1\frb{t}=\int_0^{2^K/T} \vphi\frb{t,s}\dif s\ ,
	\qquad
	\vphi\frb{t,s}=C\frb{\a}s^{-\a}\me^{ts}\ .
\eeq
The approximation $S\frb{t-1}$ of $W_1\frb{t}$ is obtained by dividing the interval $[0,2^K/T]$ into non-overlapping intervals $(c_k-r_k,c_k+r_k)$, with $k=0,\ldots,K$, and approximating the integral of $\vphi\frb{t,\cdot}$ over each interval by the appropriate Gauss-Jacobi quadrature.
Then, for each $k=0,\ldots,K$, we may use Corollary \ref{Cor:GJ_ErrEst} and estimate the error
\beql{Delk}
	\Del_{kJ}\frb{t}=\int_{c_k-r_k}^{c_k+r_k} \vphi\frb{t,s}\dif s
		-\sum_{j=1}^J b_{kj}\me^{-a_{kj}(t-1)}
\eeq
in the approximation of $\int_{c_k-r_k}^{c_k+r_k} \vphi\frb{t,s}\dif s$.

Let $k\ge 1$.
By substituting the integration variable $s=r_kx+c_k$, and using $c_k=3 r_k$, we obtain
\beq
			\int_{c_k-r_k}^{c_k+r_k} s^{-\a}\me^{-t s}\dif s
		= r_k^{1-\a}
			\int_{-1}^{1} \rb{3+x}^{-\a}\me^{-r_kt(3+ x)}\dif x\ .
\eeq
As $f_t\frb{z}=\rb{3+z}^{-\a}\me^{-r_kt(3+z)}$ is analytic for $|z|<3$, Corollary \ref{Cor:GJ_ErrEst} with $f=f_t$, and $a=b=0$ yields
\beq
	|\Del_{kJ}\frb{t}|\le 
		\ell K\hpar{0,0}_J\frb{\ell}\, C\frb{\a} r_k^{1-\a}\max_{|z|=\ell} \abs{f_t\frb{z}}
\eeq
for each $\ell\in(1,3)$.
Since for $z\in\C$ with $|z|=\ell$ there hold
\beq
	|3+z|\ge 3 -|z| = 3-\ell
\eeq
and
\beq
	\abs{\me^{-r_kt(3+z)}}=\me^{-r_kt(3+\re z)} \le \me^{-r_kt(3-\ell)}\ ,
\eeq
we get
\beq
	\abs{\Del_{kJ}\frb{t}} \le 
			\frac{\ell K\hpar{0,0}_J\frb{\ell}}{(3-\ell)^\a}\,
			C\frb{\a} r_k^{1-\a}\ \me^{-(3-\ell)tc_k}\ .
\eeq
As $c_k=3r_k$,
\[
	r_k=\frac{2}{3}\rb{c_k-c_{k-1}}\ ,
\]
and $\ell\in(1,3)$, we may find a constant $C_1$ independent of the parameters of the problem, such that the estimate
\beql{eq:Delk_est0}
	\abs{\Del_{kJ}\frb{t}} \le C_1K\hpar{0,0}_J\frb{\ell}\, q^{-\a}\vphi\frb{qt,c_k}\Del c_k
\eeq
holds for every $\ell\in(1,3)$, where $\Del c_k=c_k-c_{k-1}$, and $q=3-\ell$.
Thus, using Theorem \ref{thm:GJ_ErrKern} with $a=b=0$, we may find a constant $C_2$, independent of the parameters of the problem, such that for each $\ell\in[3/2,3)$, there holds
\beql{eq:Delk_est}
	\abs{\Del_{kJ}\frb{t}}\le C_2\, R_J\frb{\ell}\, q^{1-\a}\vphi\frb{qt,c_k}\Del c_k\ ,
\eeq
where
\beq
	R_J\frb{\ell}=(3-\ell)^{-1}\rb{\ell+\sqrt{\ell^2-1}}^{-2J}\ .
\eeq
The reason for introducing $R_J$ by multiplying and dividing the right hand side of \deq{eq:Delk_est0} by $q=3-\ell$ is made clear near the end of the proof.

Next we estimate the error of the approximation of the integral over $(c_0-r_0,c_0+r_0)$.
For $k=0$, we have $c_0=r_0=1/(2T)$, and therefore
\beq
	\int_{c_0-r_0}^{c_0+r_0} s^{-\a}\me^{-t s}\dif s
		= c_0^{1-\a} \int_{-1}^{1} \rb{1+x}^{-\a}\me^{-c_0t(1+x)}\dif x\ .
\eeq
The integral over $(-1, 1)$ is approximated by the Gauss-Jacobi quadrature associated with the weight function $\rb{1+x}^{-\a}$.
Since for $f_t\frb{z}=\me^{-c_0t(1+z)}$, and for each $\ell\in(1,3)$ there holds
\beq
	\max_{|z|=\ell} \abs{f_t\frb{z}}=\me^{-c_0t(1-\ell)}=\me^{2c_0t}\, \me^{-c_0t(3-\ell)}
		\le \me\, \me^{-c_0t(3-\ell)}\ ,
\eeq
by Corollary \ref{Cor:GJ_ErrEst} with $a=0$, $b=-\a$, $\ell\in(1,3)$ and $f=f_t\frb{z}=\me^{-c_0t z}$, there holds
\beq
	\abs{\Del_{0J}\frb{t}}\le \me \ell K_J\hpar{-\a,0}\frb{\ell}\, \vphi\frb{qt,c_0} \Del c_0\ ,
\eeq
where $q=3-\ell$,
\[
	c_0=c_0-0=\Del c_0\ .
\]
Note that since $f_t\frb{z}=\me^{-c_0t z}$ is an entire function, the requirement $\ell<3$ is not necessary for the application of Corollary \ref{Cor:GJ_ErrEst}.
That requirement, however, is necessary for the estimates below.
By Theorem \ref{thm:GJ_ErrKern} with $a=-\a$, and $b=0$, there exists a constant $C_0$, independent of the parameters of the problem, such that for all $\ell\in[3/2,3)$,
\beql{eq:Del0_est}
	\abs{\Del_{0J}\frb{t}}\le C_0\, R_J\frb{\ell}\, q^{1-\a}\vphi\frb{qt,c_0} \Del c_0\ .
\eeq

Now we may estimate the error of the approximation $S\frb{t-1}$ of $W_1\frb{t}$.
Combining estimates \deq{eq:Delk_est} and \deq{eq:Del0_est}, we recover
\beq
	\babs{W_1\frb{t}-S\frb{t-1}} \le \sum_{k=0}^K \abs{\Del_{kJ}\frb{t}}
		\le C\, R_J\frb{\ell}\, q^{1-\a} \sum_{k=0}^K \vphi\frb{qt,c_k}\Del c_k\ .
\eeq
As the sum on the right hand side of the inequality above is a lower Riemann sum, we get
\beq
	\babs{W_1\frb{t}-S\frb{t-1}}
		 \le C\, R_J\frb{\ell}\, q^{1-\a} \int_0^{c_K} \vphi\frb{qt,s}\dif s
		\le C\, R_J\frb{\ell}\, q^{1-\a} w\frb{qt}\ .
\eeq
Hence, the estimate
\beql{eq:W1_est1}
	\babs{W_1\frb{t}-S\frb{t-1}} \le C\, R_J\frb{\ell}\, w\frb{t}\ ,
\eeq
holds for every $\ell\in[3/2,3)$.
Since the left hand side of \deq{eq:W1_est1} is independent of $\ell$, we recover
\beql{eq:W1_est2}
	\babs{W_1\frb{t}-S\frb{t-1}} \le C \sqb{\inf_{\ell\in[3/2,3)} R_J\frb{\ell}}w\frb{t}\ .
\eeq
To complete the proof, we must show that the infimum on the right hand side of \deq{eq:W1_est2} satisfies an appropriate estimate at the limit where $J$ tends to infinity.
Such an estimate is provided by Lemma \ref{lem:estRJlJ}.
\end{proof}

The following lemma is the time-domain counterpart of estimate (3.22) of \cite{KC_FDEs}, and its proof is based on the same idea.
The lemma provides an estimate of the error of neglecting the integral over $(c_K+r_K,\infty)$ in \deq{wIntRep_Split}.

\begin{lemma}\label{lem:W2approx}
Let $\a\in(0,1)$, and
\beql{eq:W2}
	W_2\frb{a,t}=C\frb{\a}\int_a^{\infty} s^{-1+\a}\me^{-ts}\dif s\ .
\eeq
Then, for each $a\ge0$ and $t\ge1$, the following estimate holds
\beql{eq:W2Est}
	W_2\frb{a,t} \le \frac{\Gf{1-\a,a}}{\Gf{1-\a}} w\frb{t}\ .
\eeq
\end{lemma}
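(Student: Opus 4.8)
The plan is to compute $W_2\frb{a,t}$ in closed form in terms of the upper incomplete gamma function and then reduce \deq{eq:W2Est} to an elementary monotonicity statement. Recalling that $W_2\frb{a,t}$ is precisely the tail of the integral representation \deq{wIntRep} of $w$, with integrand $s^{-\a}\me^{-ts}$, I would first rescale the integration variable by setting $u=ts$, so that $\dif s=t^{-1}\dif u$ and the lower limit $s=a$ becomes $u=at$. This transforms \deq{eq:W2} into
\beq
	W_2\frb{a,t}=C\frb{\a}\, t^{-1+\a}\int_{at}^\infty u^{-\a}\me^{-u}\dif u\ .
\eeq
The remaining integral is the upper incomplete gamma function $\Gf{1-\a,at}$. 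Using $C\frb{\a}=1/\rb{\Gf{\a}\Gf{1-\a}}$ together with $w\frb{t}=t^{-1+\a}/\Gf{\a}$, the factor $t^{-1+\a}$ is absorbed into $w\frb{t}$, and I obtain the exact identity
\beq
	W_2\frb{a,t}=\frac{\Gf{1-\a,at}}{\Gf{1-\a}}\, w\frb{t}\ .
\eeq

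The second step is to pass from $\Gf{1-\a,at}$ to $\Gf{1-\a,a}$. Since the integrand $u^{-\a}\me^{-u}$ is positive for $u>0$, the map $x\mapsto\Gf{1-\a,x}$ is decreasing on $[0,\infty)$. For $a\ge0$ and $t\ge1$ we have $at\ge a$, hence $\Gf{1-\a,at}\le\Gf{1-\a,a}$. Substituting this bound into the identity above yields exactly \deq{eq:W2Est}.

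I do not expect any genuine obstacle here: the argument is a scaling substitution followed by monotonicity of the incomplete gamma function. The only point deserving attention is that the substitution naturally produces $\Gf{1-\a,at}$ rather than $\Gf{1-\a,a}$, and it is precisely the hypothesis $t\ge1$ that lets one replace the $t$-dependent lower limit $at$ by $a$; this is what makes the right-hand side of \deq{eq:W2Est} a clean, $t$-independent measure of the tail neglected in \deq{wIntRep_Split}. As a consistency check, at $a=0$ one has $\Gf{1-\a,0}=\Gf{1-\a}$, so the identity collapses to $W_2\frb{0,t}=w\frb{t}$, in agreement with \deq{wIntRep}, and \deq{eq:W2Est} holds with equality.
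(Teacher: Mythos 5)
Your proof is correct, but it takes a genuinely different route from the paper's. You evaluate $W_2$ in closed form: the substitution $u=ts$ gives the exact identity $W_2\frb{a,t}=\Gf{1-\a,at}\,w\frb{t}/\Gf{1-\a}$, and the hypothesis $t\ge1$ enters only through the monotonicity of $x\mapsto\Gf{1-\a,x}$, which lets you replace $\Gf{1-\a,at}$ by $\Gf{1-\a,a}$. The paper never evaluates the integral; it proves the inequality directly by showing that $\Del\frb{a,t}=\Gf{1-\a,a}\int_0^\infty s^{-\a}\me^{-ts}\dif s-\Gf{1-\a}\int_a^\infty s^{-\a}\me^{-ts}\dif s$ is nonnegative: after cancelling the common part it is rewritten as the double integral $\int_0^a\int_a^\infty (su)^{-\a}\me^{-(ts+u)}\rb{1-\me^{-(t-1)(u-s)}}\dif u\dif s$, whose integrand is nonnegative for $t\ge1$ because $u\ge a\ge s$ there. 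Your argument is shorter and yields strictly more information: the exact identity shows that the neglected tail is really $\Gf{1-\a,at}/\Gf{1-\a}$ times $w\frb{t}$, so the bound $B_K$ appearing in Theorem \ref{thm:main} is conservative for large $t$ (the relative tail error actually decays as $t$ grows), and your $a=0$ consistency check falls out for free. The paper's symmetrization argument, in exchange, does not rely on the rescaled integral being a recognizable special function, and so is the kind of trick that transfers to kernels for which no closed form appears after substitution (it is the time-domain analogue of an estimate in \cite{KC_FDEs}). One housekeeping point: the integrand in \deq{eq:W2} is printed as $s^{-1+\a}$, but for $W_2$ to be the tail of \deq{wIntRep}, as it is used in the proof of Theorem \ref{thm:main}, it must be $s^{-\a}$; you silently worked with the corrected integrand, which is also what the paper's own proof does.
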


\begin{proof}
To prove \deq{eq:W2Est}, we show
\beq
	\int_a^\infty s^{-\a}\me^{-t s}\dif s \le \frac{\Gf{1-\a,a}}{\Gf{1-\a}} \int_0^\infty s^{-\a}\me^{-t s}\dif s\ .
\eeq
We do this by showing that
\beq
	\Del\frb{a,t} =\Gf{1-\a,a}\int_0^\infty s^{-\a}\me^{-t s}\dif s
			-\Gf{1-\a}\int_a^\infty s^{-\a}\me^{-t s}\dif s
\eeq
is nonnegative.
After cancelation of terms we get
\beq
	\Del\frb{a,t} =\Gf{1-\a,a}\int_0^a s^{-\a}\me^{-t s}\dif s -\g\frb{1-\a,a}\int_a^\infty s^{-\a}\me^{-t s}\dif s
\eeq
where $\g$ is the lower incomplete gamma function.
Thus, we have
\beqa
	\Del\frb{a,t}
		& =\Gf{1-\a,a}\int_0^a s^{-\a}\me^{-t s}\dif s -\g\frb{1-\a,a}\int_a^\infty s^{-\a}\me^{-t s}\dif s \\
		& =\int_0^a\int_a^\infty (su)^{-\a}\rb{\me^{-(t s+u)} -\me^{-(s+t u)}} \dif u\dif s
\eeqa
which, for $t\ge 1$, yields
\beq
	\Del\frb{a,t} =\int_0^a\int_a^\infty (su)^{-\a}\me^{-(t s+u)}
		\rb{1 -\me^{-(t-1)(u-s)}} \dif u\dif s \ge 0\ ,
\eeq
and hence the conclusion.
\end{proof}

We are now in a position to prove Theorem \ref{thm:main}.

\begin{proof}[Proof (Theorem \ref{thm:main})]
Let  $T_0>0$ and $\del_0\in(0,T_0)$.
We show that the theorem holds true for $\del=\del_0$ and $T=T_0$.
Let $S_0$ be of the form of $S$ given by \deq{eq:wdelApprox} with $\del=1$ and $T=\del_0^{-1}T_0$.
We have
\beq
	w\frb{\tau}=W_1\frb{\tau}+W_2\frb{2^K\del_0T_0^{-1},\tau}
\eeq
where $W_1$ and $W_2$ are given by \deq{eq:W1} and \deq{eq:W2}, respectively.
By Lemma \deq{lem:W1approx}, there exists a positive constant $C$ such that
\beq
	\abs{W_1\frb{\tau}-S_0\frb{\tau-1}} \le CA_Jw\frb{\tau}
\eeq
for each $\tau\in[1,\del_0^{-1}T_0]$.
Lemma \deq{lem:W2approx} states that
\beq
	W_2\frb{2^K\del_0T_0^{-1},\tau} \le B_K\frb{\a,\del_0T_0^{-1}} w\frb{\tau}
\eeq
for all $\tau\ge 1$.
Thus we recover
\beq
	\abs{w\frb{\tau}-S_0\frb{\tau-1}}\le \rb{CA_J+B_K\frb{\a,\del_0T_0^{-1}}} w\frb{\tau}
\eeq
for all $\tau\in[1,\del_0^{-1}T_0]$.
By multiplying the last inequality by $\del_0^{-1+\a}$ and substituting $\tau=\del_0^{-1}t+1$ we recover \deq{eq:mainErrEst} with $\del=\del_0$ and $T=T_0$, and $S\frb{t}=\del^{-1+\a}S_0\brb{\del_0^{-1}t}$.
Since the latter $S$ is of the form \deq{eq:wdelApprox} with $\del=\del_0$ and $T=T_0$, we have the conclusion.
\end{proof}

\section{Time stepping schemes}
\label{sec:SteppScheme}

A time stepping method for \deq{IVPgen} adopting the approach discussed in Section \ref{sec:TimeStepSchemeSetup} requires a scheme for approximating Volterra equations \deq{Uloc} on short intervals, a kernel compression scheme for approximating the history term, and a scheme for approximating \deq{auxODE}.
Below we test two methods of this type employing the kernel compression scheme \deq{eq:wdelApprox}.
The first method is given by
\bseql{eq:TR}
\beql{eq:VoltTR}
	v^{n+1}=\delt^\a\rb{W_0f^{n+1}+W_1f^{n}}+\Phi^n b
\eeq
and
\beql{auxODE_TR}
	\Phi^{n+1} =\sqb{\Phi^{n}\rb{I-\frac{\delt}{2}A} +\frac{1}{2}\rb{f^{n+1}+f^{n}}\One}
		\rb{I+\frac{\delt}{2}A}^{-1}
\eeq
\eseq
where $v^n$ and $\Phi^n$ are the numerical approximations of $u\frb{t_n}$ and $\Psi\frb{t_n}$, respectively, at the $n$-th time level $t_n$, $f^n=f\frb{t_n,v^n}$, and
\beq
	W_0=\frac{1}{\Gf{2+\a}}\ ,
	\qquad
	W_1=\frac{\a}{\Gf{2+\a}}\ .
\eeq
This is an implicit scheme which is obtained by formally replacing $F\circ U$ in \deq{Uloc} by an interpolating polynomial of degree one, and approximating \deq{auxODE} by the trapezoidal rule.
Note that since $A$ in \deq{auxODE_TR} is diagonal, advancing $\Phi$ does not require solving a large algebraic system of equations.
Advancing $v$, however, does require that we solve \deq{eq:VoltTR} for $v^{n+1}$.
In the tests below, this is done by Newton's method.
A similar method employing the kernel compression scheme of \cite{KC_FDEs} is tested therein.
The second method is a high order and adaptive method, denoted LER-IDC in \cite{KC_Adapt}, which we test here with the kernel compression scheme \deq{eq:wdelApprox}.
The method is obtained by applying an integral deferred correction scheme based on the left endpoint rule for the approximation of \deq{Uloc} and a 4th order, L-stable, diagonally implicit Runge-Kutta scheme for the approximation of \deq{auxODE}.
It employs adaptive step size control and modifies the kernel compression approximation accordingly.
For simplicity, we denote this scheme LER-IDR, similarly to \cite{KC_Adapt}.

\section{Numerical results}
\label{sec:NumRes}

In this section are results obtained with the kernel compression scheme.
The tests in Section \ref{sec:ApproxOKern} compare the kernel $w$ and the proposed kernel compression approximation directly, and do not involve a time stepping procedure.
The tests in Section \ref{sec:AppTimeSteppScheme} are obtained with the two fully discrete time stepping methods discussed in Section \ref{sec:SteppScheme}.

\subsection{Approximation of the kernel}
\label{sec:ApproxOKern}

To test estimate \deq{eq:mainErrEst}, we compare the relative error
\beql{relErr}
	\Del\frb{t}=\abs{\frac{w\frb{t}-S\frb{t-\del}}{w\frb{t}}}
\eeq
of the approximation of the kernel, where $S$ is given by \deq{eq:wdelApprox}, with $A_J$ and $B_K$.
In the following $P=(K+1)J$ is the total number of terms in $S$.

Figure \ref{fig:KerApprox_RelErr_t} shows the relative error $\Del$ as a function of $t$ in a neighborhood of $[\del,T]$, for two schemes:
Figure \ref{fig:KerApprox_RelErr_t_a} shows results obtained with the present scheme, and Figure \ref{fig:KerApprox_RelErr_t_b} shows results obtained with the scheme proposed in \cite{KC_FDEs}, where $p$ is the number of circles, $m$ the number of poles on each circle, and $P\approx pm/2$ the number of poles used in practice.
The results are obtained for $T=10^2$ and $\del=10^{-4}$.
\begin{figure}
\centering
\begin{subfigure}[t]{\sfw}%
	\centering
	\includegraphics[width=\ssfw,trim=0 0 30 0]{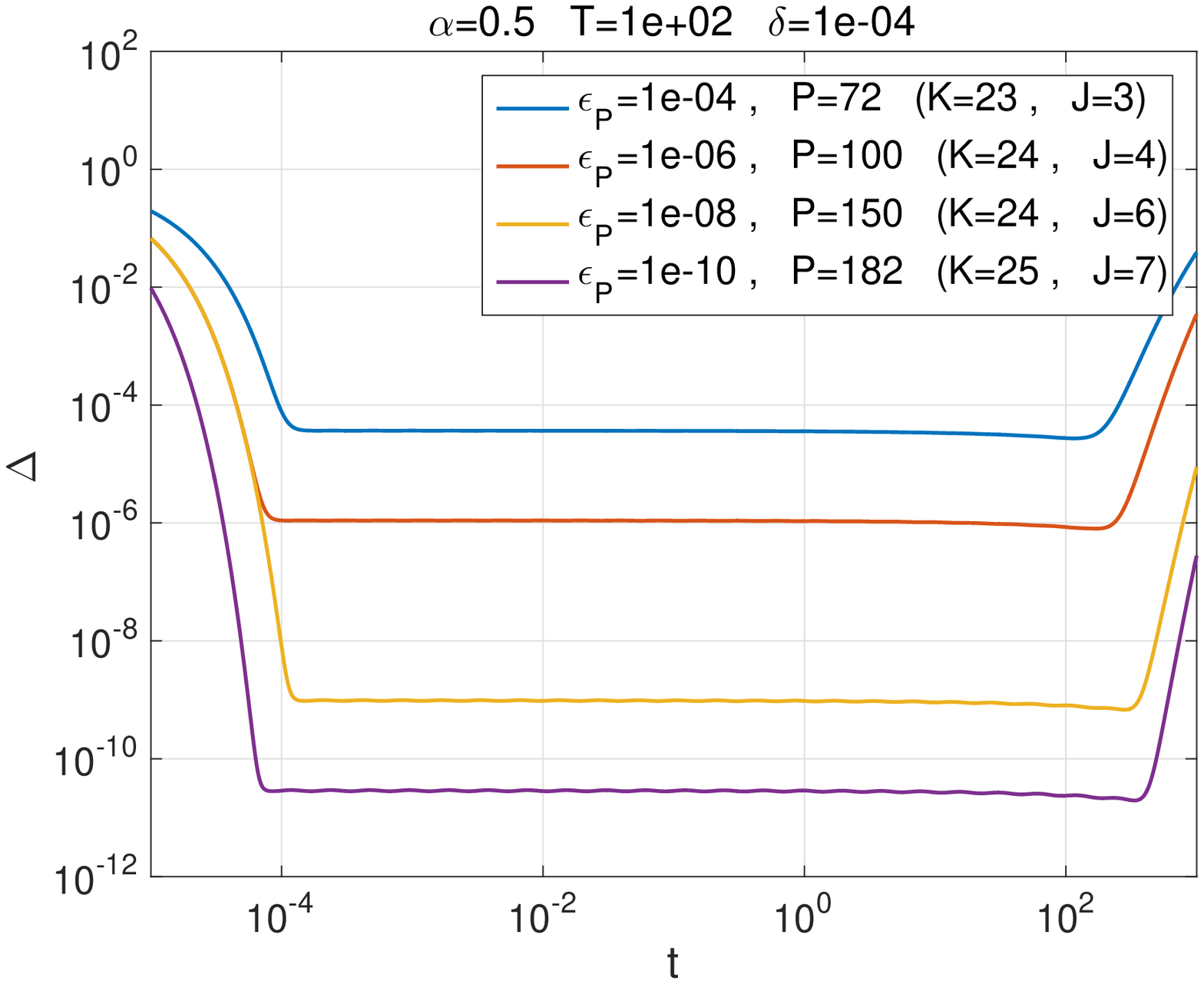}
	\caption{}
	\label{fig:KerApprox_RelErr_t_a}%
\end{subfigure}%
\hfill
\begin{subfigure}[t]{\sfw}%
	\centering
	\includegraphics[width=\ssfw,trim=0 0 30 0]{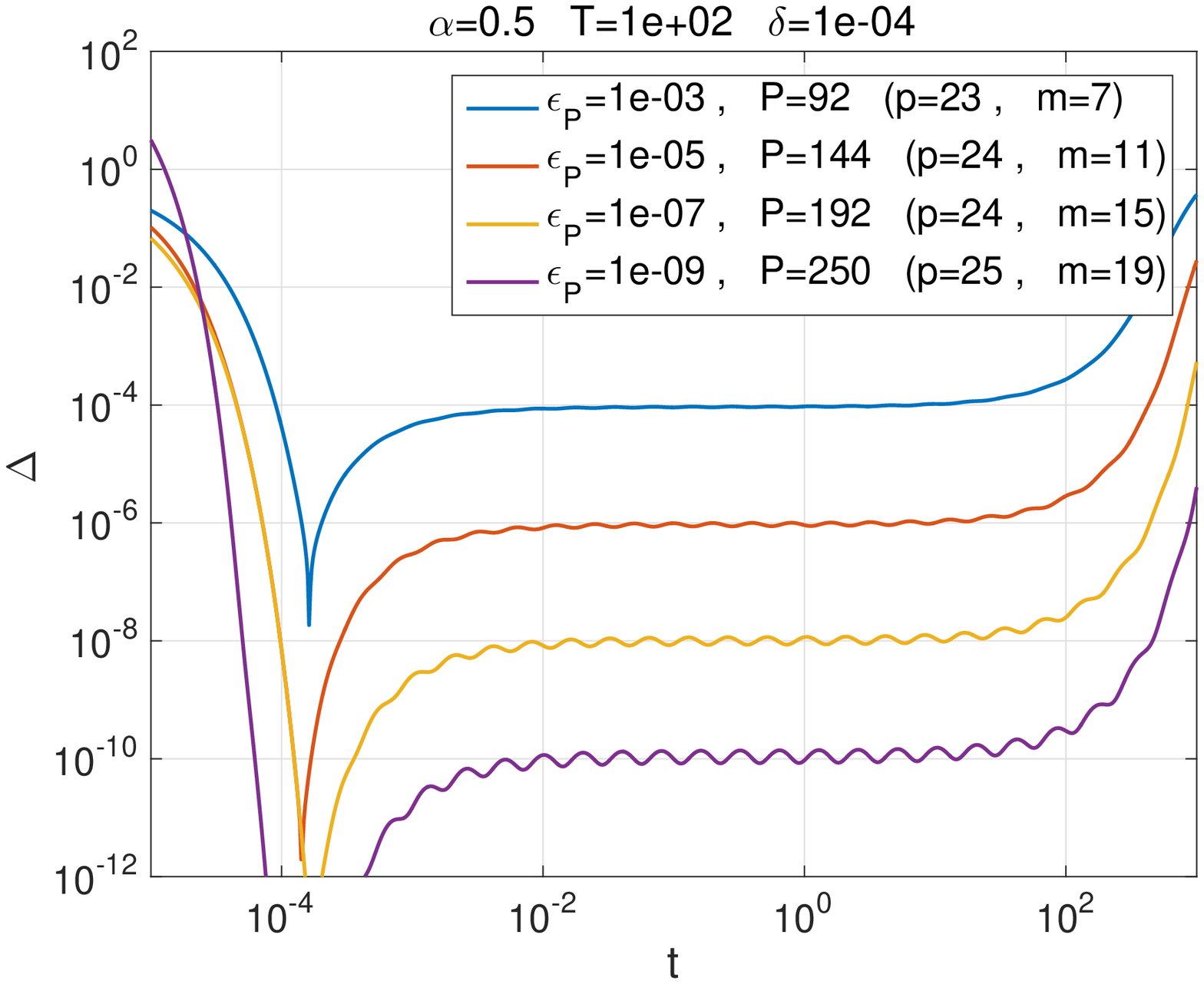}
	\caption{}
	\label{fig:KerApprox_RelErr_t_b}%
\end{subfigure}%
\caption{The relative error $\Del$, given by \deq{relErr}, in the approximation of the kernel $w$ as a function of $t$.
	\sSubref{fig:KerApprox_RelErr_t_a} Scheme \deq{eq:wdelApprox}.
	\sSubref{fig:KerApprox_RelErr_t_b} Scheme proposed in \cite{KC_FDEs}.
}
\label{fig:KerApprox_RelErr_t}
\end{figure}
Comparing the two figures, we see that in this setup, the present scheme yields slightly smaller errors with a significantly smaller number of terms.
Note that the schemes require similar numbers of intervals or circles in their respective composite quadratures, however the present scheme requires a smaller number of quadrature nodes in each interval compared to the scheme proposed in \cite{KC_FDEs} in order to achieve similar accuracy.

Figure \ref{fig:KerApprox_MRelErr_JK} shows the maximum $M$ of $\Del\frb{t}$ on a grid with a hundred points in each of the intervals $[\del 10^q\, ,\, \del 10^{q+1}]$, with $q$ integer, covering $[\del,T]$.
The results are obtained with $\a=0.01$ (top row), $\a=0.5$ (middle row), and $\a=0.99$ (bottom row).
The figures on the left show graphs of $M$ as functions $J$, with $T=10^4$, and $\del=10^{-4}$.
The different graphs in each figure are obtained with different values of $K$.
The figures on the right show graphs of $M$ as functions of $K$, with $T=10^2$, and $J=3$.
The different graphs correspond to different values of $\del$.
The dashed black lines show the prediction of the estimators $A_J$ on the left and $B_K$ on the right.
\begin{figure}
\centering
\begin{subfigure}[t]{\sfw}%
	\centering
	\includegraphics[width=\ssfw,trim=0 0 30 0]{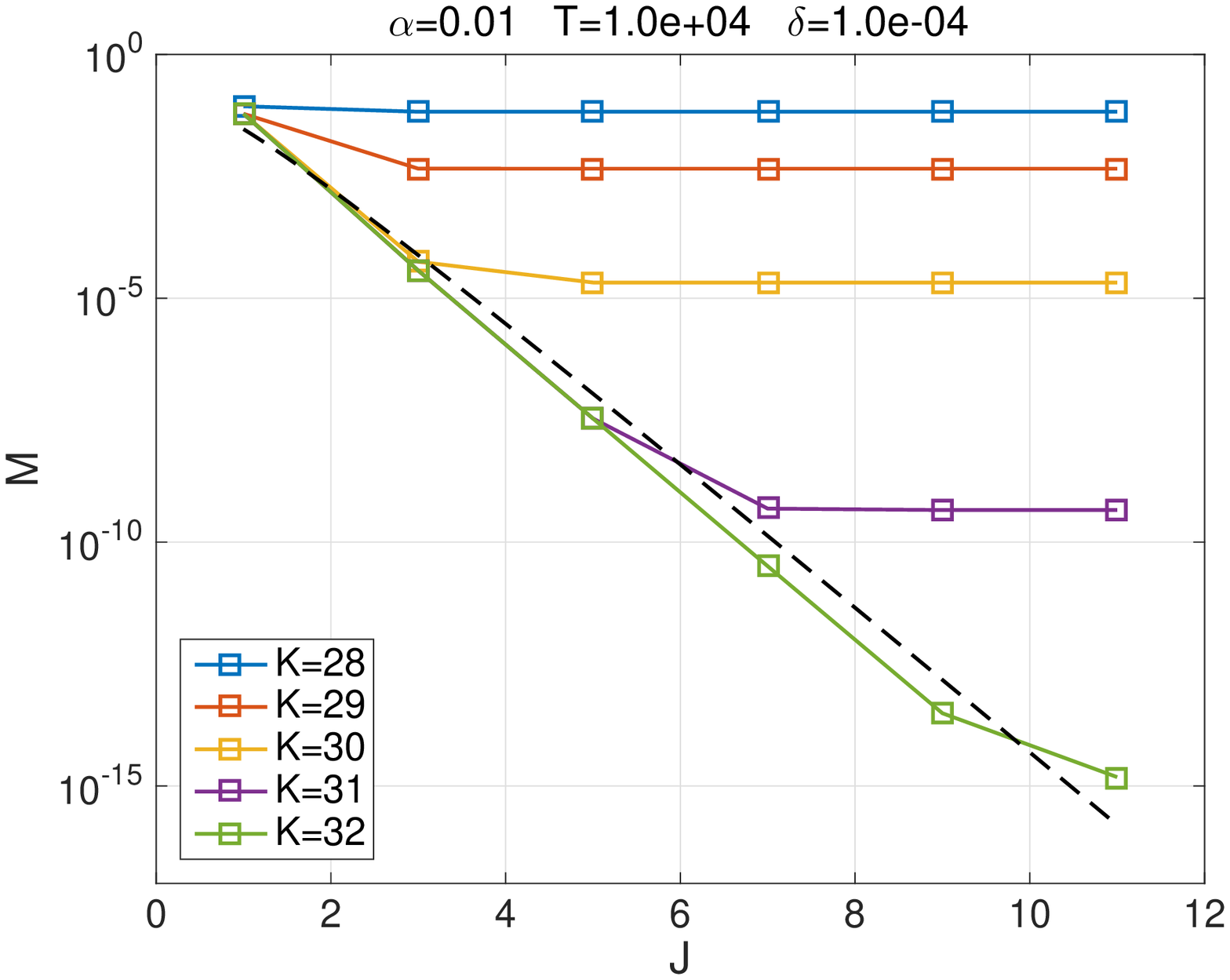}
	\caption{}
\end{subfigure}%
\hfill
\begin{subfigure}[t]{\sfw}%
	\centering
	\includegraphics[width=\ssfw,trim=0 0 30 0]{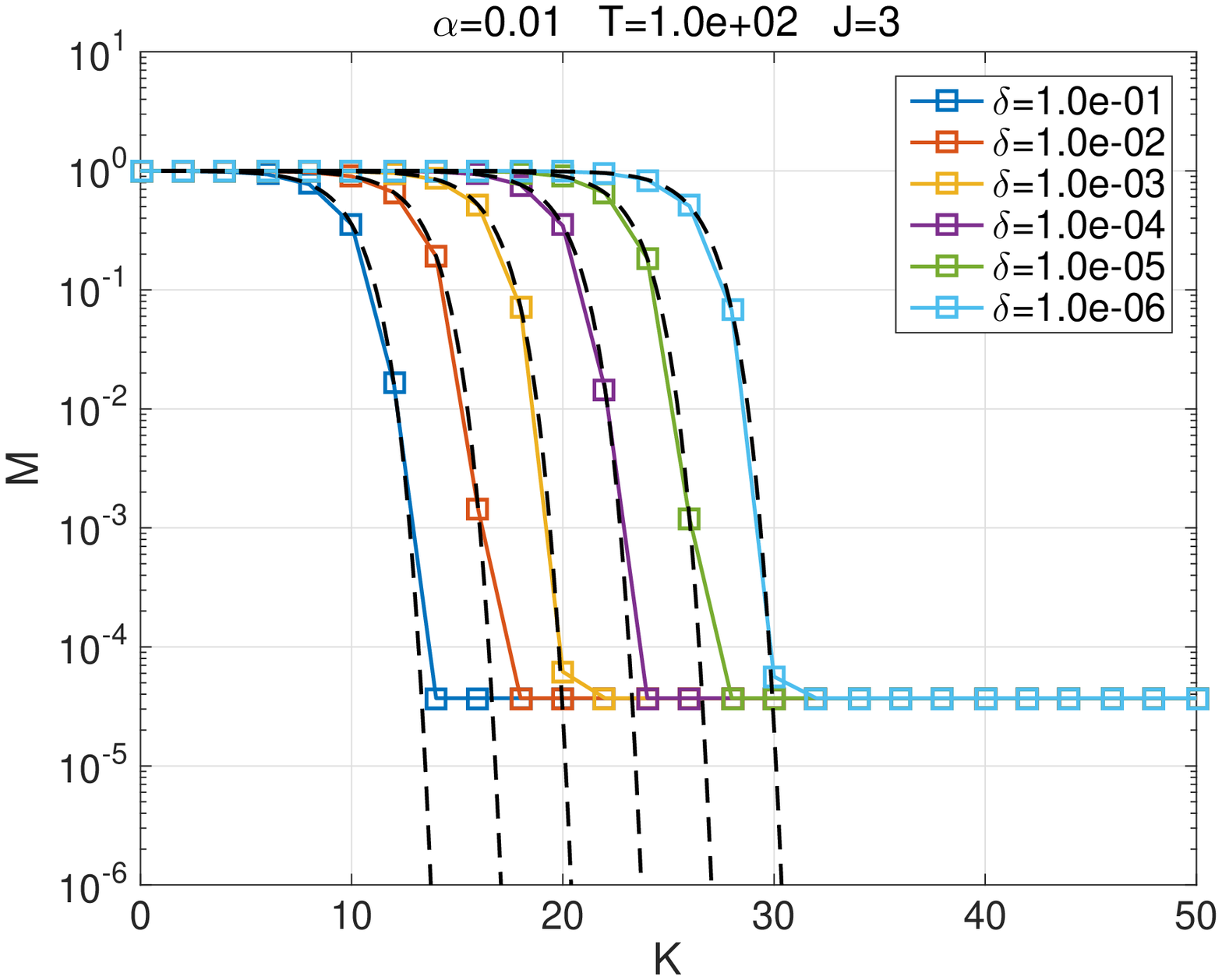}
	\caption{}
\end{subfigure}%
\\
\begin{subfigure}[t]{\sfw}%
	\centering
	\includegraphics[width=\ssfw,trim=0 0 30 0]{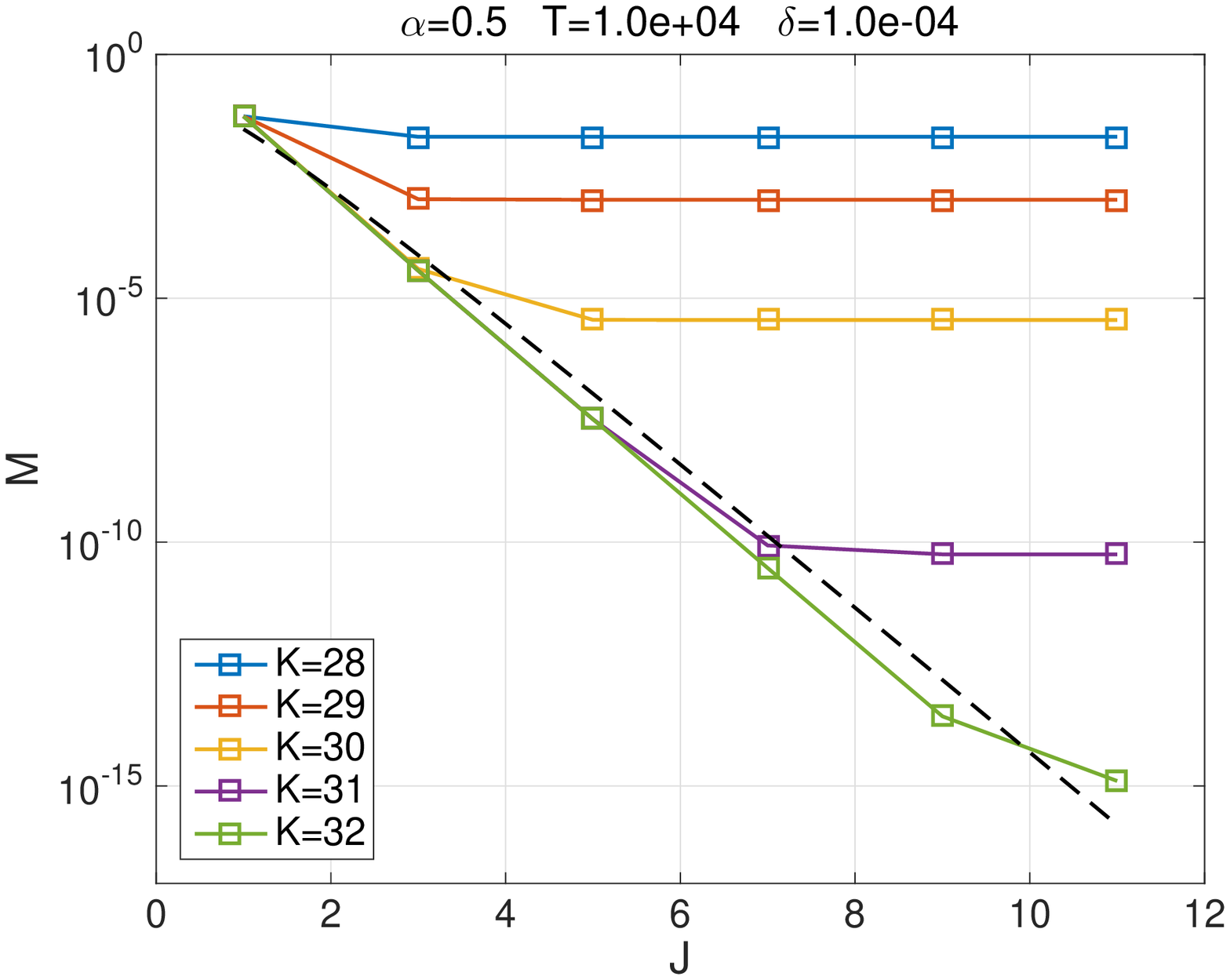}
	\caption{}
\end{subfigure}%
\hfill
\begin{subfigure}[t]{\sfw}%
	\centering
	\includegraphics[width=\ssfw,trim=0 0 30 0]{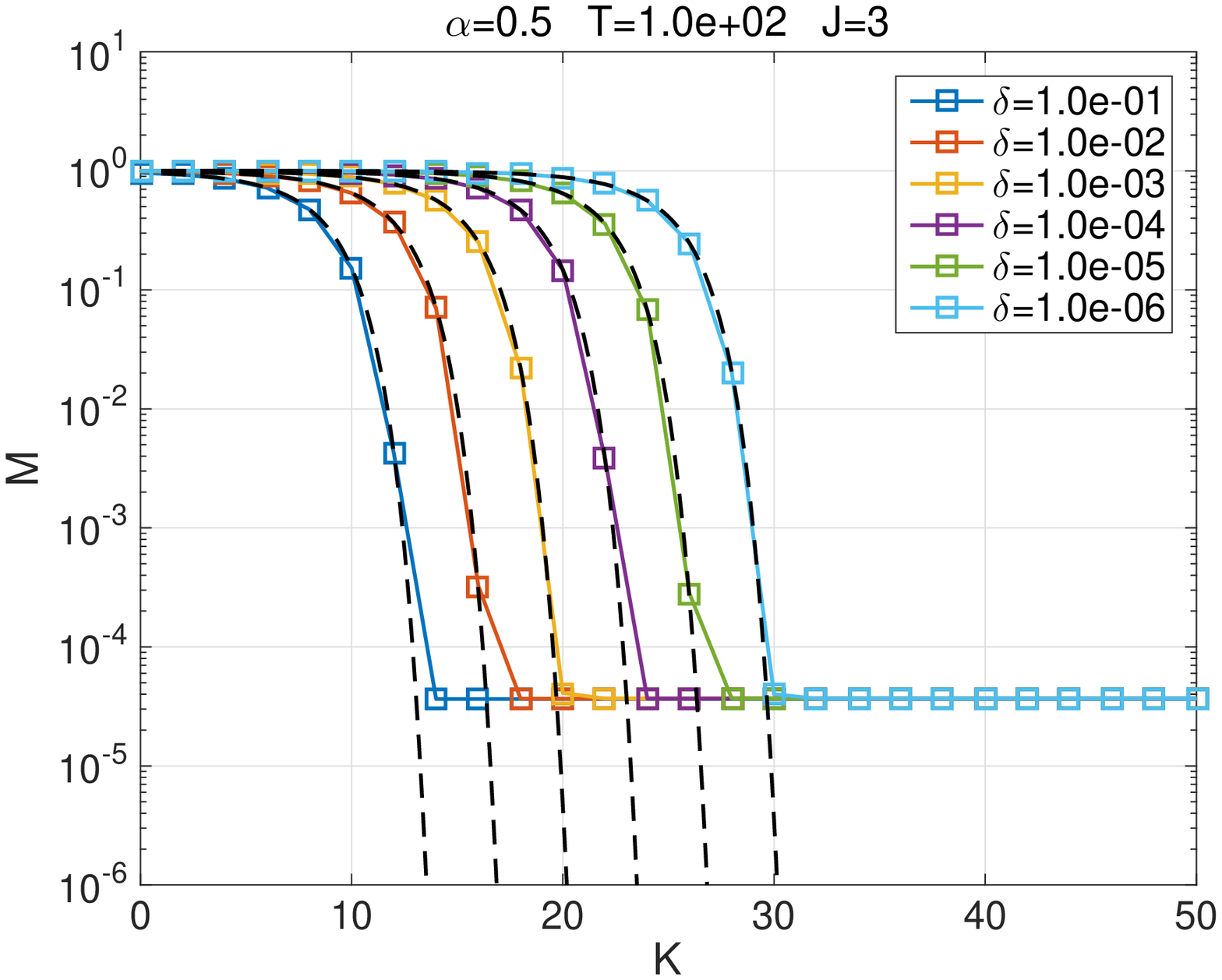}
	\caption{}
\end{subfigure}%
\\
\begin{subfigure}[t]{\sfw}%
	\centering
	\includegraphics[width=\ssfw,trim=0 0 30 0]{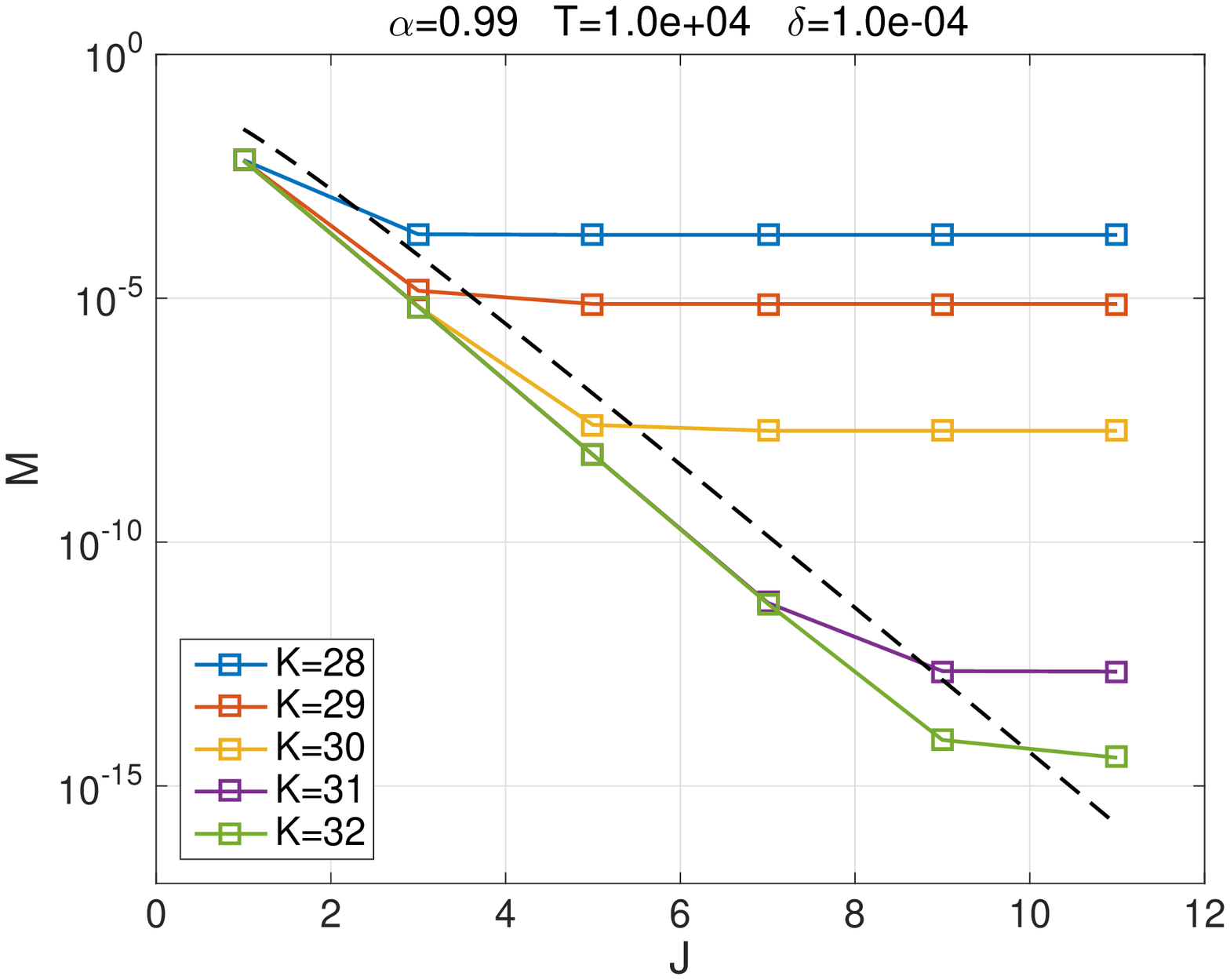}
	\caption{}
\end{subfigure}%
\hfill
\begin{subfigure}[t]{\sfw}%
	\centering
	\includegraphics[width=\ssfw,trim=0 0 30 0]{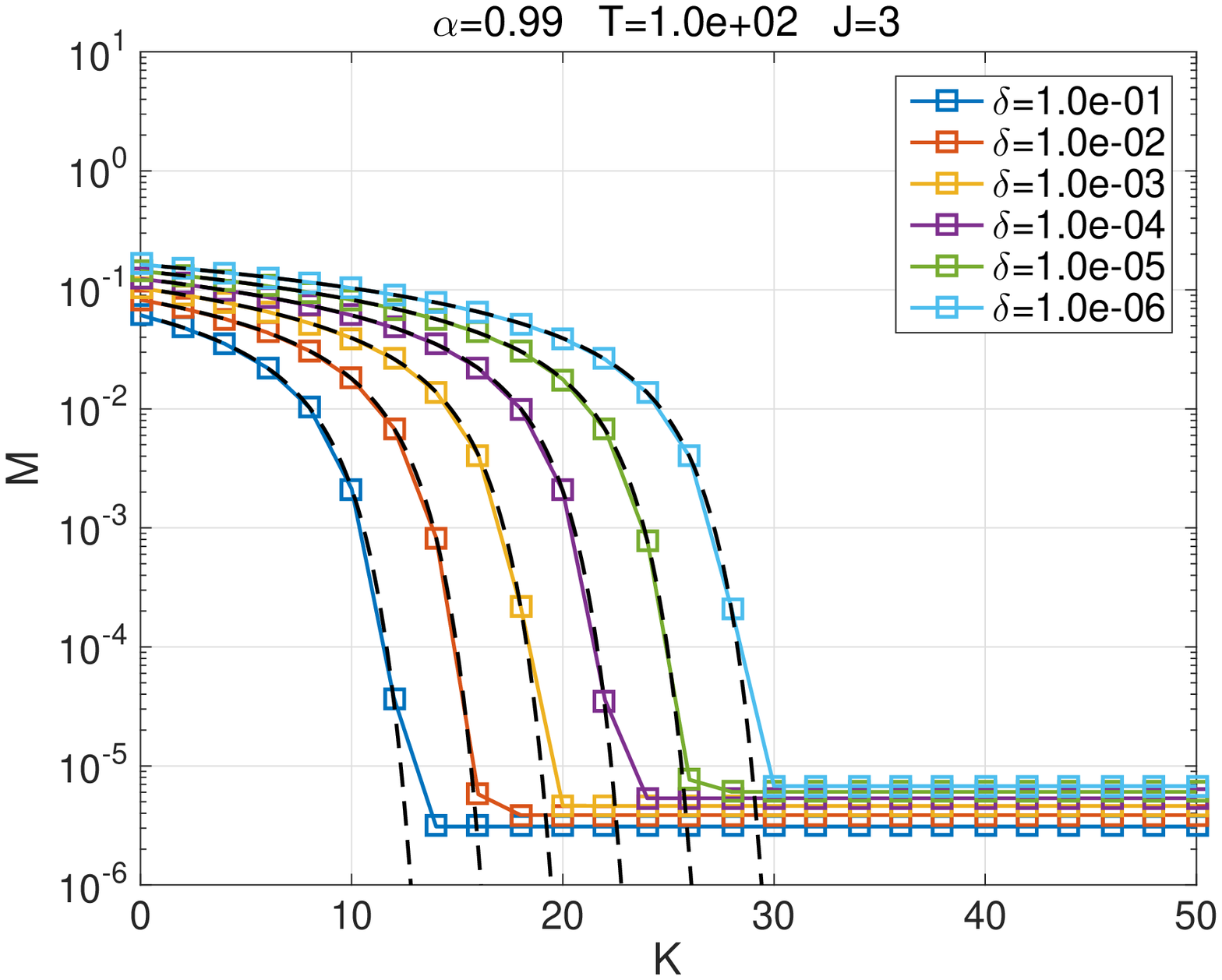}
	\caption{}
\end{subfigure}%
\caption{The maximum $M$ of the relative error $\Del$ plotted as a function of $J$ on the left and as a function of $K$ on the right.
The figures are obtained with $\a=0.01$ (top), $\a=0.5$ (middle), and $\a=0.99$ (bottom).
The dashed lines in each figure are the graphs of $A_J$ (left) and $B_K\big(\a,\del T^{-1}\big)$ (right), showing the theoretical estimate.
}
\label{fig:KerApprox_MRelErr_JK}
\end{figure}
The figures show that estimate \deq{eq:mainErrEst} predicts the error well.
The qualitative behavior of the error is also in agreement with \deq{eq:mainErrEst}.
Since both $A_J$ and $B_K$ are bounded for $\a\in(0,1)$, the number of terms required for the approximation to satisfy a prescribed error tolerance is bounded for $\a\in(0,1)$.
Comparing figures in the same column, we conclude that the results support this proposition.
Note, however that we measure smaller errors for $\a$ close to one.
Another property of \deq{eq:mainErrEst} is that the number of quadrature nodes $J$ in each interval required to satisfy an error tolerance is bounded with respect to $\a\in(0,1)$, $\del$ and $T$.
This proposition is supported by the results in the right column, which suggest that the best accuracy that can be obtained for a fixed $J$ is bounded below a certain value for all $\a$, and $\del$.

\subsection{Application to time stepping schemes}
\label{sec:AppTimeSteppScheme}

Consider the initial value problem
\beql{MLFIVP}
	D^\a u=\lam u\ ,
	\qquad
	u\frb{0}=1\ .
\eeq
Its solution is given by
\beql{ivp1Sol}
	u\frb{t}=E_\a\frb{\lam t^\a}\ ,
	\qquad
	E_\a\frb{t}=\sum_{k=0}^\infty \frac{t^k}{\Gf{\a k+1}}\ ,
\eeq
where $E_\a$ is the Mittag-Leffler function.
We approximate the solution of \deq{MLFIVP} in $(0,T)$, with $T=10$, by using \deq{eq:TR} with constant step size $\delt=10^{-3}$.
Figure \ref{fig:MLF_e} shows the results for $\lam=-1$, and $\a$ having either of the three values $\a=0.2,0.5,0.8$.
The the top left figure shows the solutions computed using \cite{MLF_Code}, and the remaining three show the error $e$ given by
\beql{eq:LocErrABS}
	e^n=|u\frb{t_n}-v^n|\ ,
\eeq
where $t_n=\delt n$, with $n=0,\ldots,N$.
In each of the latter, the different graphs correspond to different values of the parameters $K$ and $J$ controlling the approximation of the kernel.
Figure \ref{fig:MLFi_e} shows results for $\lam=i$, and $\a=0.8$.
The figure on the left shows the real and imaginary parts of the solution $u$ computed using \cite{MLF_Code}, and the figure on the right shows the error $e$.
In both figures \ref{fig:MLF_e} and \ref{fig:MLFi_e}, the graphs corresponding the highest number $P$ of auxiliary variables, seem to be saturated by the discretization error.
That is, at that point, improving the accuracy of the approximation of the kernel does not reduce the error anymore.
This demonstrates the efficiency of the scheme which requires a small number of auxiliary variables to account for the history term.
While in all the tests $P\le 100$ is sufficient to account for the history term, the time stepping scheme requires $N=10^4$ steps to compute the approximation in the interval of interest $(0,T)$.
\begin{figure}
\centering
\begin{subfigure}[t]{\sfw}%
	\centering
	\includegraphics[width=\ssfw,trim=0 0 30 0]{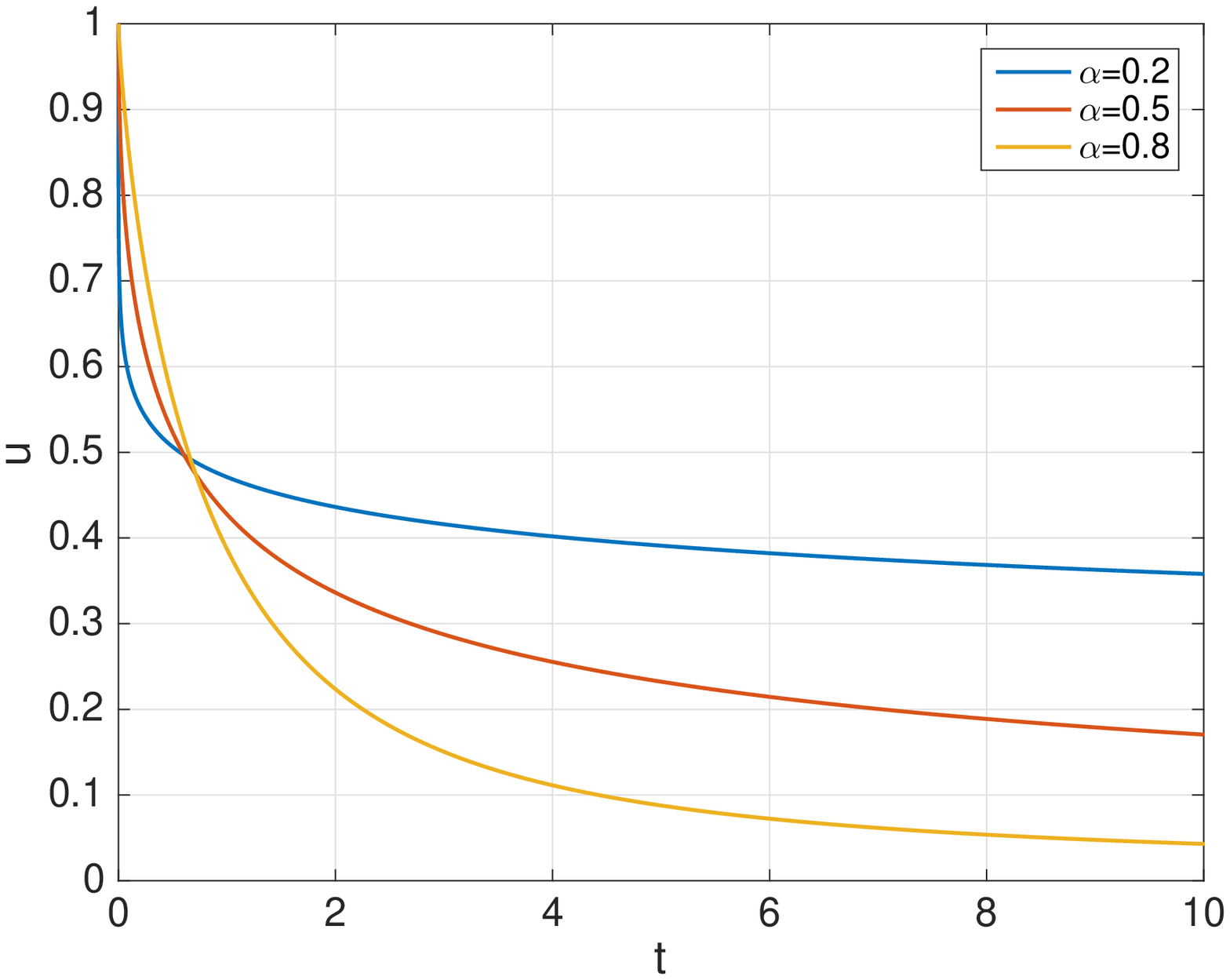}
	\caption{}
	\label{fig:MLF-1Sol}
\end{subfigure}%
\hfill
\begin{subfigure}[t]{\sfw}%
	\centering
	\includegraphics[width=\ssfw,trim=0 0 30 0]{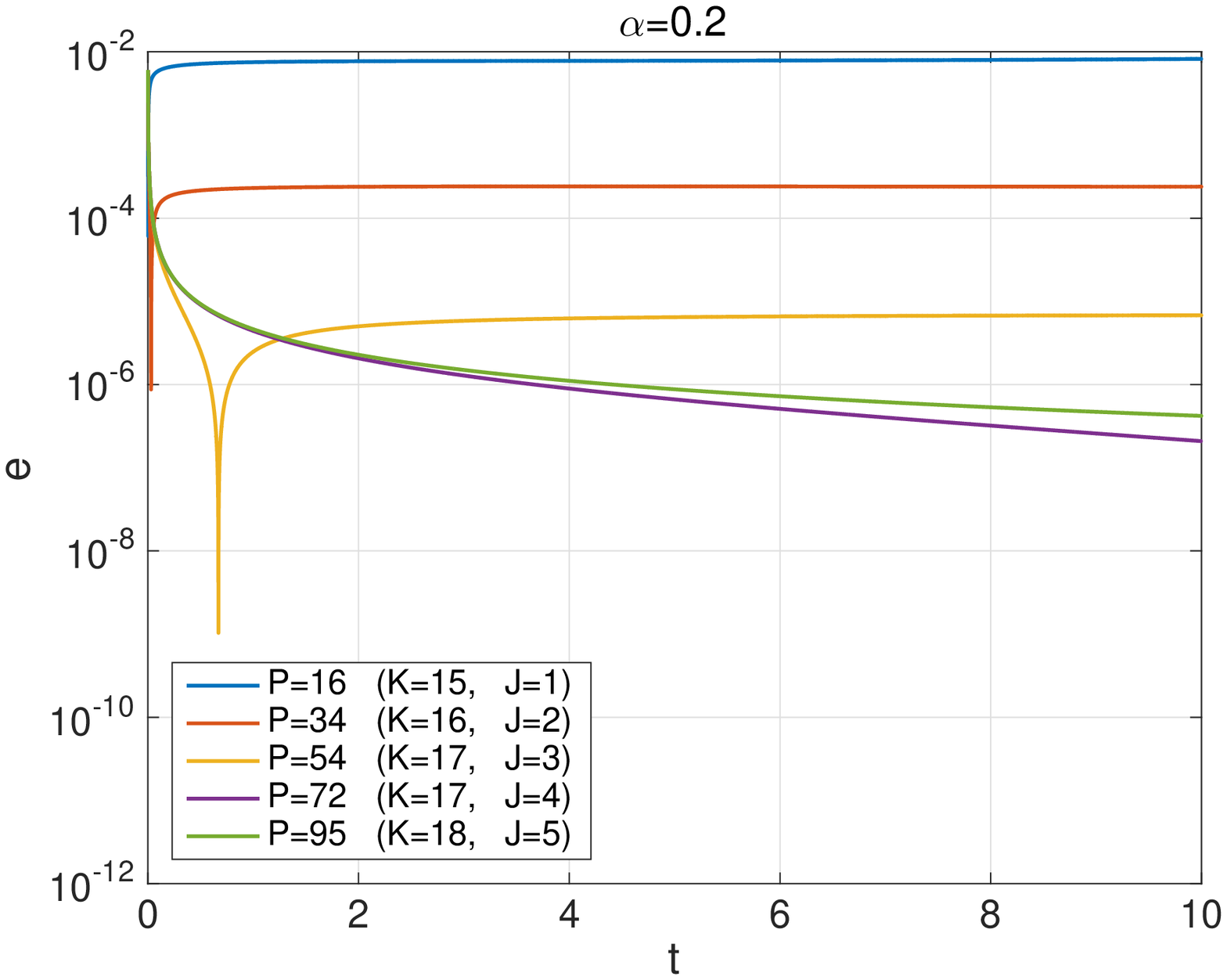}
	\caption{}
	\label{fig:MLF-1_1}
\end{subfigure}%
\\
\begin{subfigure}[t]{\sfw}%
	\centering
	\includegraphics[width=\ssfw,trim=0 0 30 0]{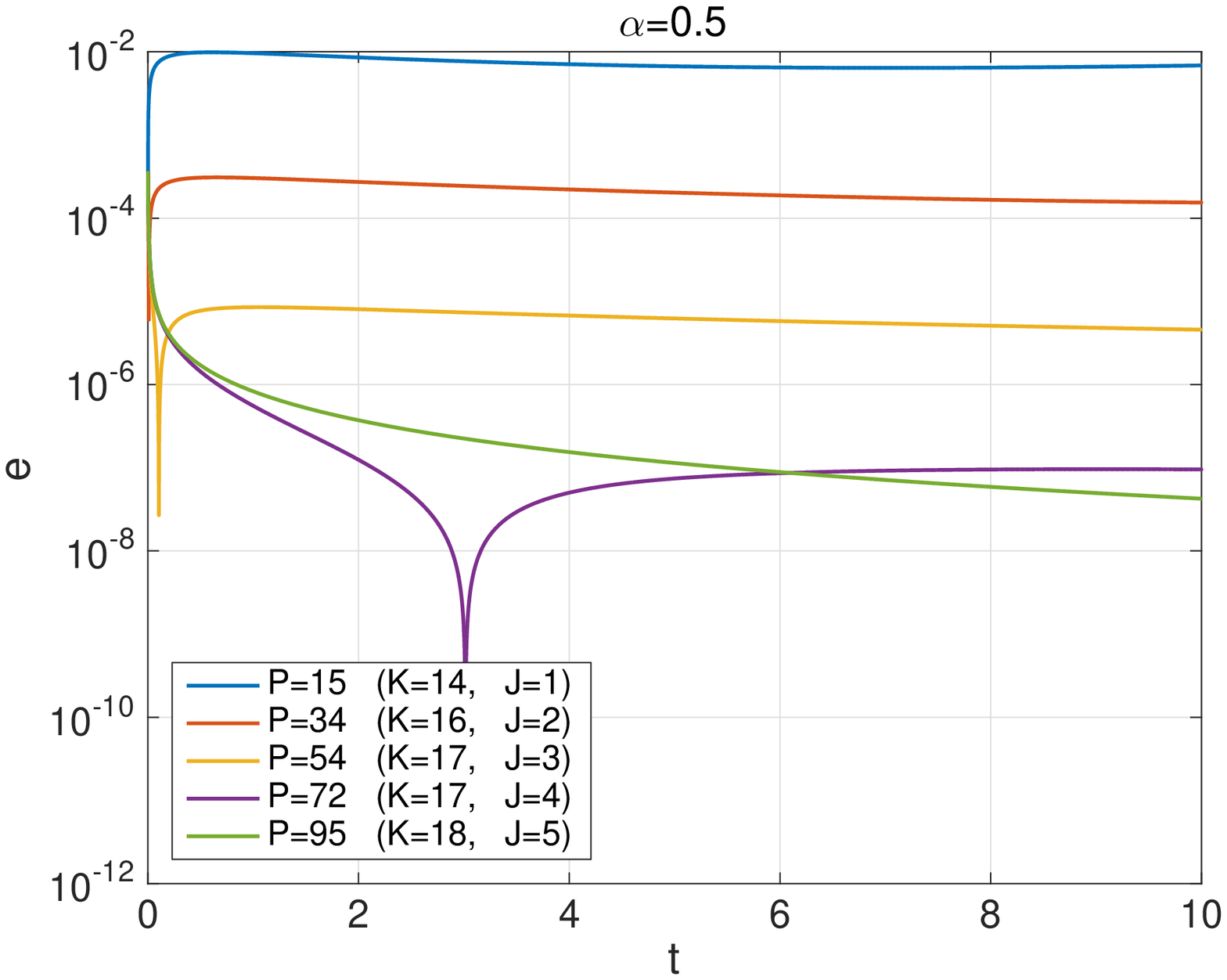}
	\caption{}
	\label{fig:MLF-1_2}
\end{subfigure}%
\hfill
\begin{subfigure}[t]{\sfw}%
	\centering
	\includegraphics[width=\ssfw,trim=0 0 30 0]{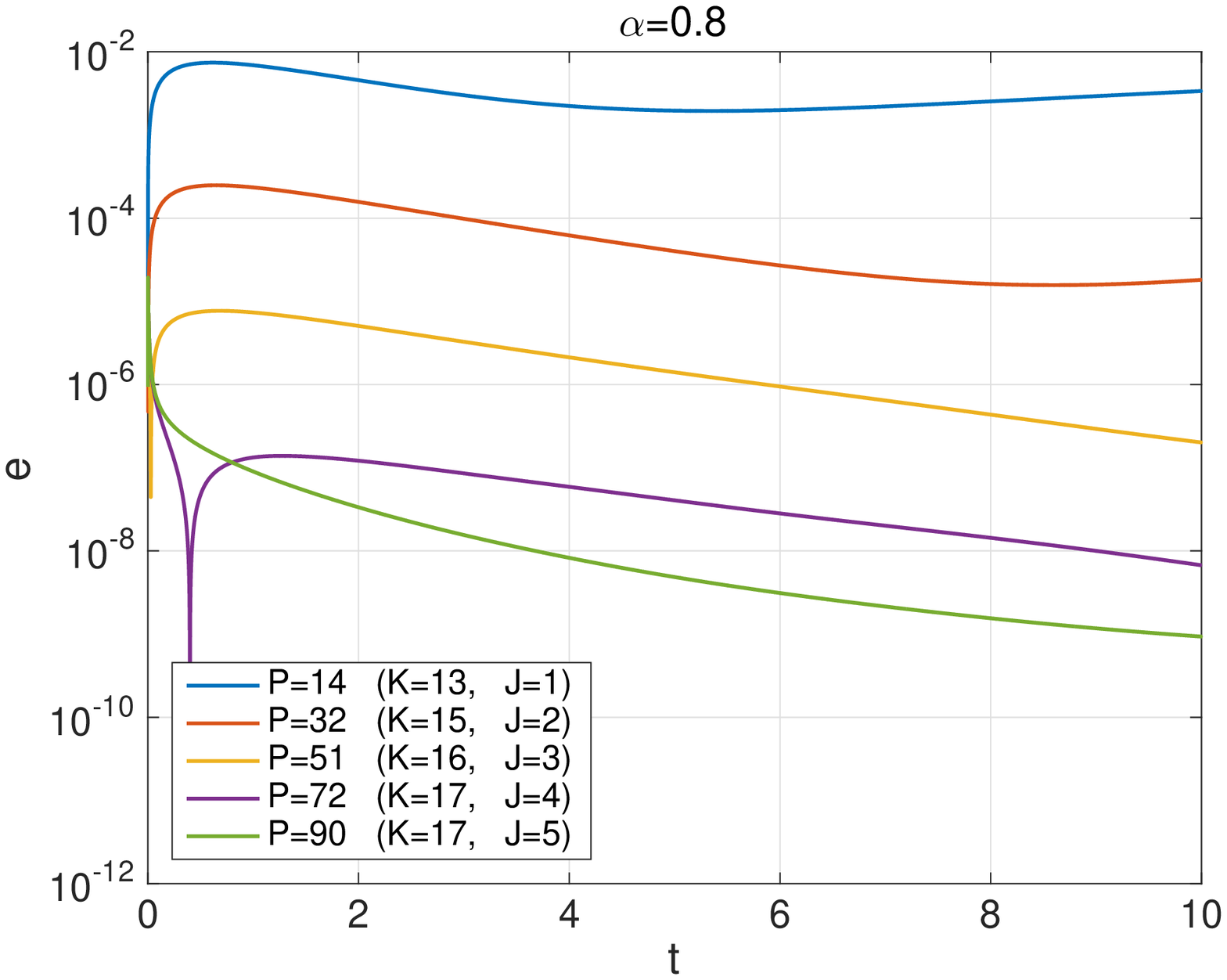}
	\caption{}
	\label{fig:MLF-1_3}
\end{subfigure}
\caption{Problem \deq{MLFIVP} with $\lam=-1$.
	\sSubref{fig:MLF-1Sol} Solutions for $\a=0.2$, $0.5$, and $0.8$.
	\sSubref{fig:MLF-1_1}--~\sSubref{fig:MLF-1_3} The error $e$ as a function of $t$,
	for $\a=0.2$, $0.5$, and $0.8$, respectively.}
\label{fig:MLF_e}
\end{figure}
\begin{figure}
\centering
\begin{subfigure}[t]{\sfw}%
	\centering
	\includegraphics[width=\ssfw,trim=0 0 30 0]{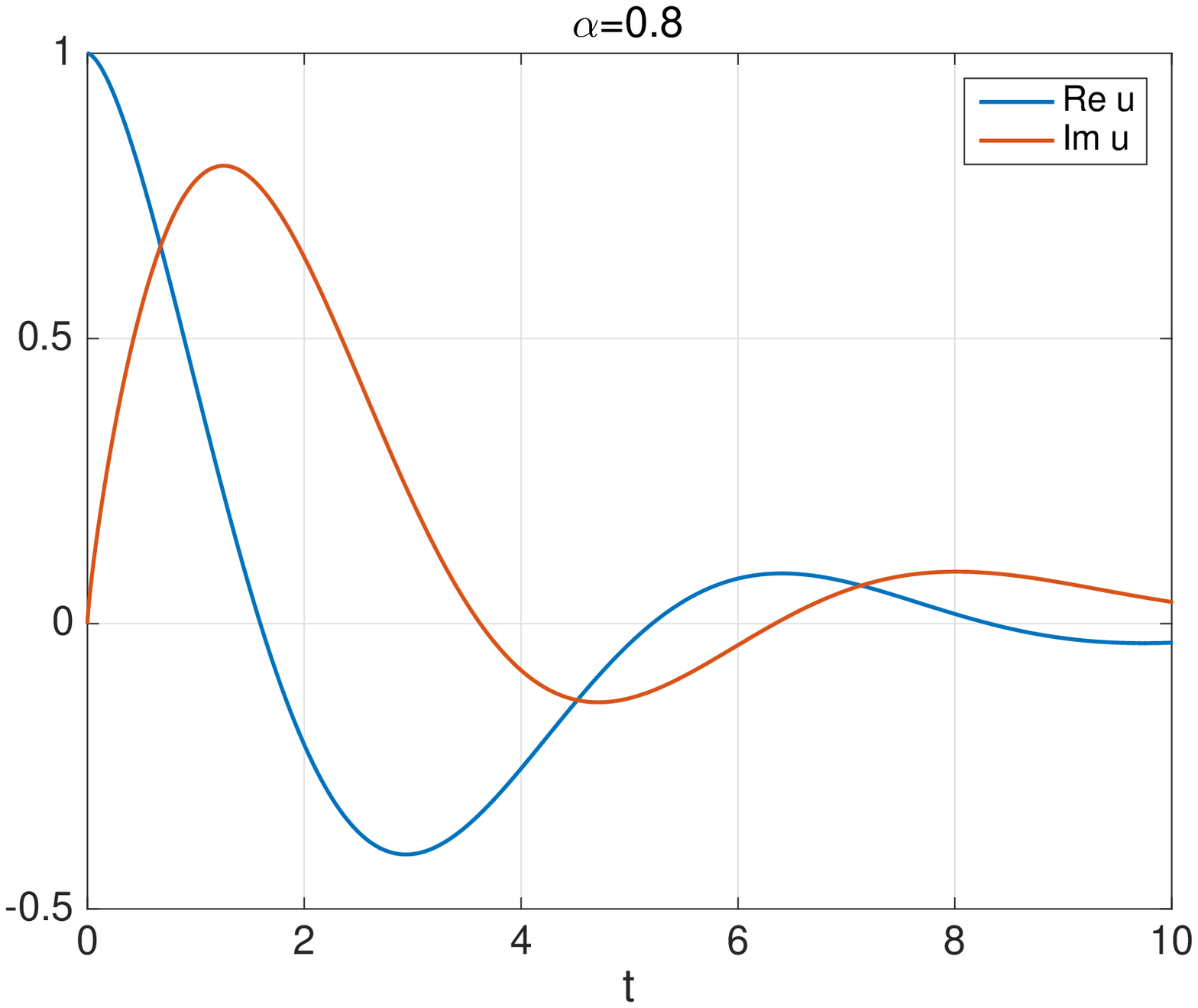}
	\caption{}
	\label{fig:MLFi}
\end{subfigure}%
\hfill
\begin{subfigure}[t]{\sfw}%
	\centering
	\includegraphics[width=\ssfw,trim=0 0 30 0]{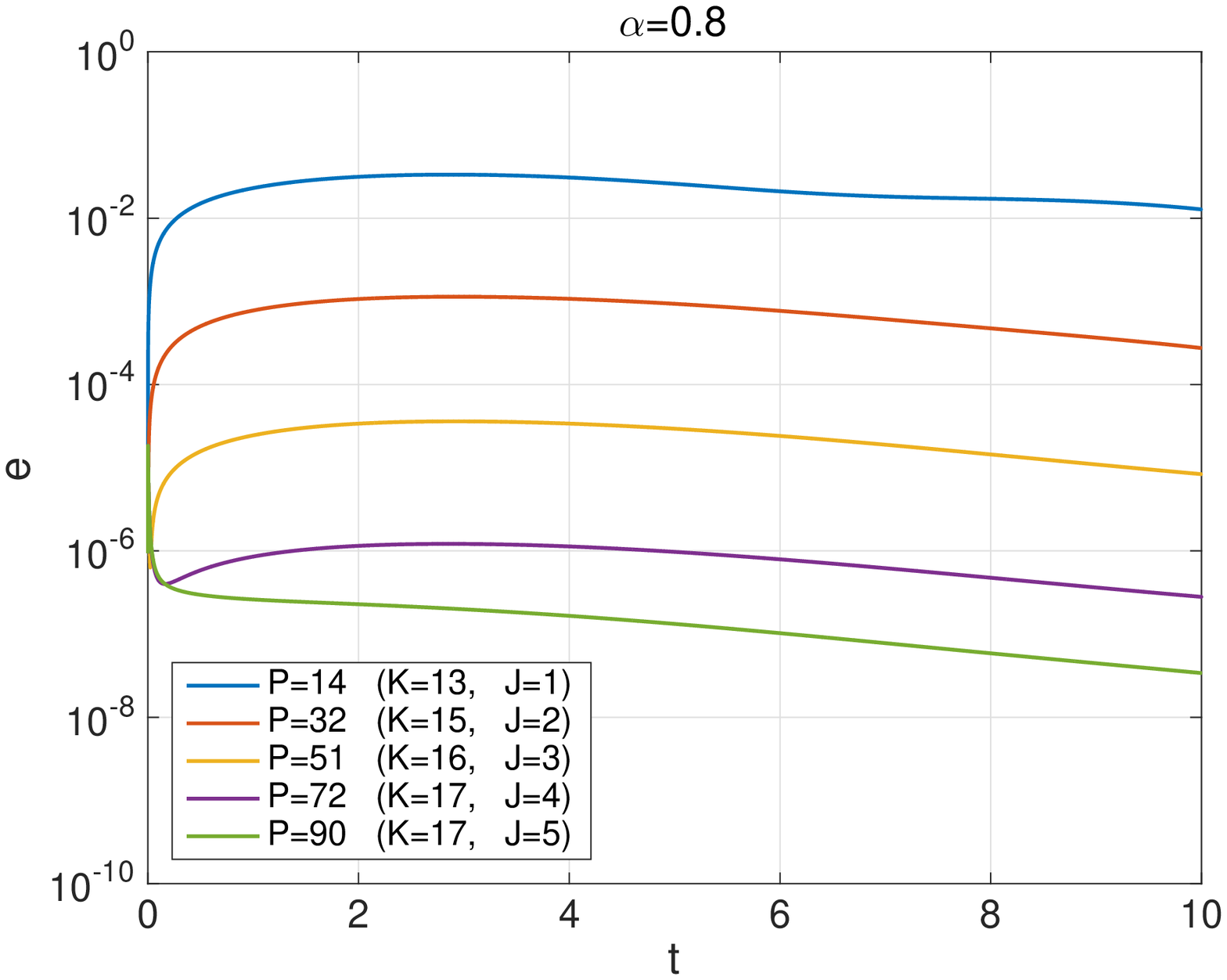}
	\caption{}
	\label{fig:MLFi_1}
\end{subfigure}%
\caption{Problem \deq{MLFIVP} with $\lam=i$ and $\a=0.8$.
	\sSubref{fig:MLFi} Real and imaginary parts of the solution $u$.
	\sSubref{fig:MLFi_1} The error $e$ as a function of $t$.}
\label{fig:MLFi_e}
\end{figure}

Next we test the performance of the kernel compression scheme as a part of a high order adaptive time stepping method, denoted LER-IDC (for details, see Section \ref{sec:SteppScheme}).
Consider the nonlinear fractional differential equation
\bseql{vdp2IVP}
\beql{vdp2}
	\rb{D^\a}^2 x -\mu\rb{1-x^2}D^\a x +x=0
\eeq
in $\rb{0,T}$, with initial conditions
\beq
	x\frb{0}=x_0\ , \qquad D^\a x\frb{0}=y_0\ .
\eeq
\eseq
Here $\mu$ is a non-negative constant, and $x_0$, $y_0\in\R$.
For $\a=1$, \deq{vdp2} is reduced to the classical Van der Pol equation which can be shown to have a stable periodic solution.
To apply the scheme we write \deq{vdp2} as a system by substituting $y=D^\a x$.
Thus, we have
\bseql{vdpIVP}
\begin{align}
\label{vdpx}
	D^\a x &=y \\
\label{vdpy}
	D^\a y &=\veps\rb{1-x^2}y -x\ ,
\end{align}
in $\rb{0,T}$ subject to the initial condition
\beql{incondsys}
	x\frb{0}=x_0\ , \qquad y\frb{0}=y_0\ .
\eeq
\eseq
In the tests below, $\a=0.8$, $\mu=4$, $x_0=2$, $y_0=0$, and $T=25$.
To measure the accuracy of approximations we compare them to a reference numerical solution.
The reference solution $v_\tref=(x_\tref,y_\tref)^T$, shown in Figure \ref{fig:FracVDP1}, is computed with the an adaptive TR-IDC scheme \cite{KC_Adapt}, with error tolerance $\veps=10^{-10}$ and the kernel compression scheme of \cite{KC_FDEs}.

Figure \ref{fig:FracVDP2} shows, starting at the top right and continuing counterclockwise, the number $P$ of auxiliary variables, the step size $\delt$, and the local error
\beq
	e_h^n=|v^n-v_\tref\frb{t_n}|_1\ ,
\eeq
where $v_\tref=v_\tref\frb{t}$ is the spline interpolation of reference solution $v_\tref$ at $t$.
The different graphs in each figure are obtained with different error tolerances $\veps_\delt$.
Comparing Figure \ref{fig:FracVDP2} and Figure \ref{fig:FracVDP1}, we see that the program is able to detect changes in the behavior of the solution, and change the step size and the number of auxiliary variables accordingly.
Figure \ref{fig:FracVDP3} shows the global error
\beq
	E_1=\sum_{n=1}^N e_h^n\delt_{n-1}
\eeq
as a function of the average step size
\beq
	\delt_{\mathrm{avg}}=\frac{1}{N}\sum_{n=1}^N \delt_{n-1}=\frac{T}{N}\ ,
\eeq
where for each $n$, $\delt_n=t_{n+1}-t_{n}$.
Thus we measure 4th-order convergence of the global error as a function of the average step size, similarly to the results presented in \cite{KC_Adapt}.
\begin{figure}
\centering
\includegraphics[width=\sfw,trim=0 0 30 0]{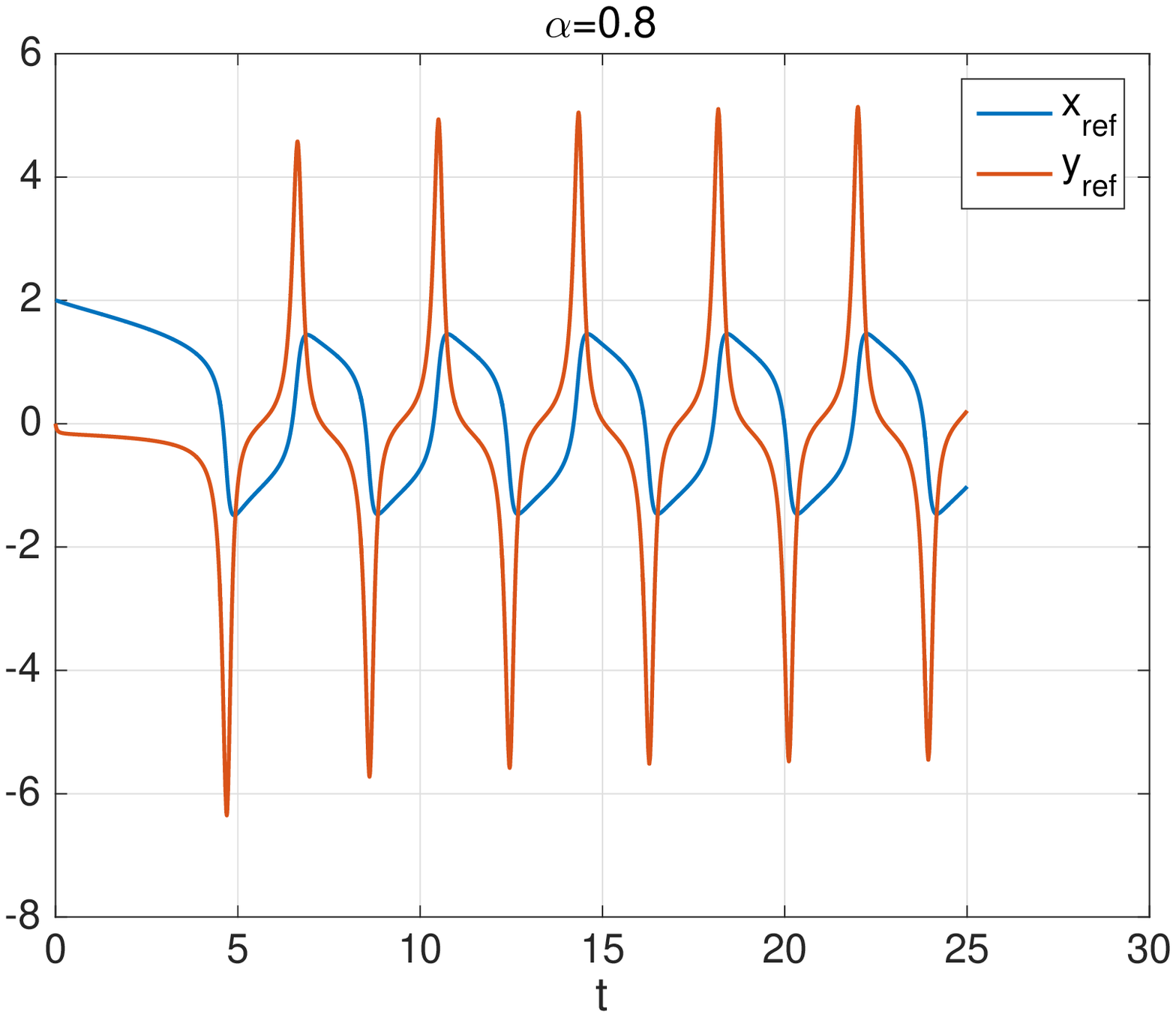}
\caption{Fractional Van der Pol equation: reference solution.}
\label{fig:FracVDP1}
\end{figure}
\begin{figure}
\centering
\begin{subfigure}[t]{\sfw}%
	\centering
	\includegraphics[width=\ssfw,trim=0 0 30 0]{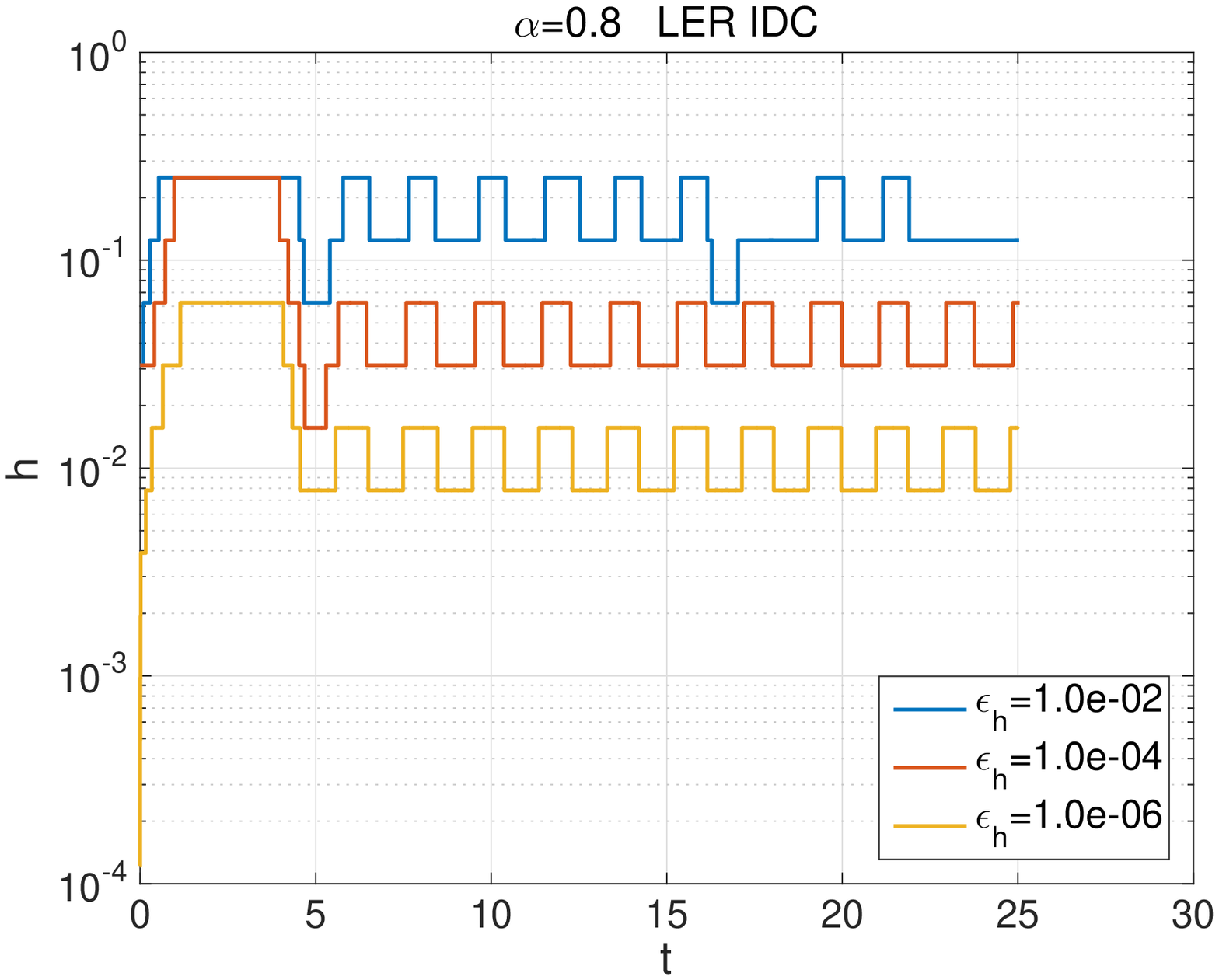}
	\caption{}
	\label{fig:VDP2_1}
\end{subfigure}%
\hfill
\begin{subfigure}[t]{\sfw}%
	\centering
	\includegraphics[width=\ssfw,trim=0 0 30 0]{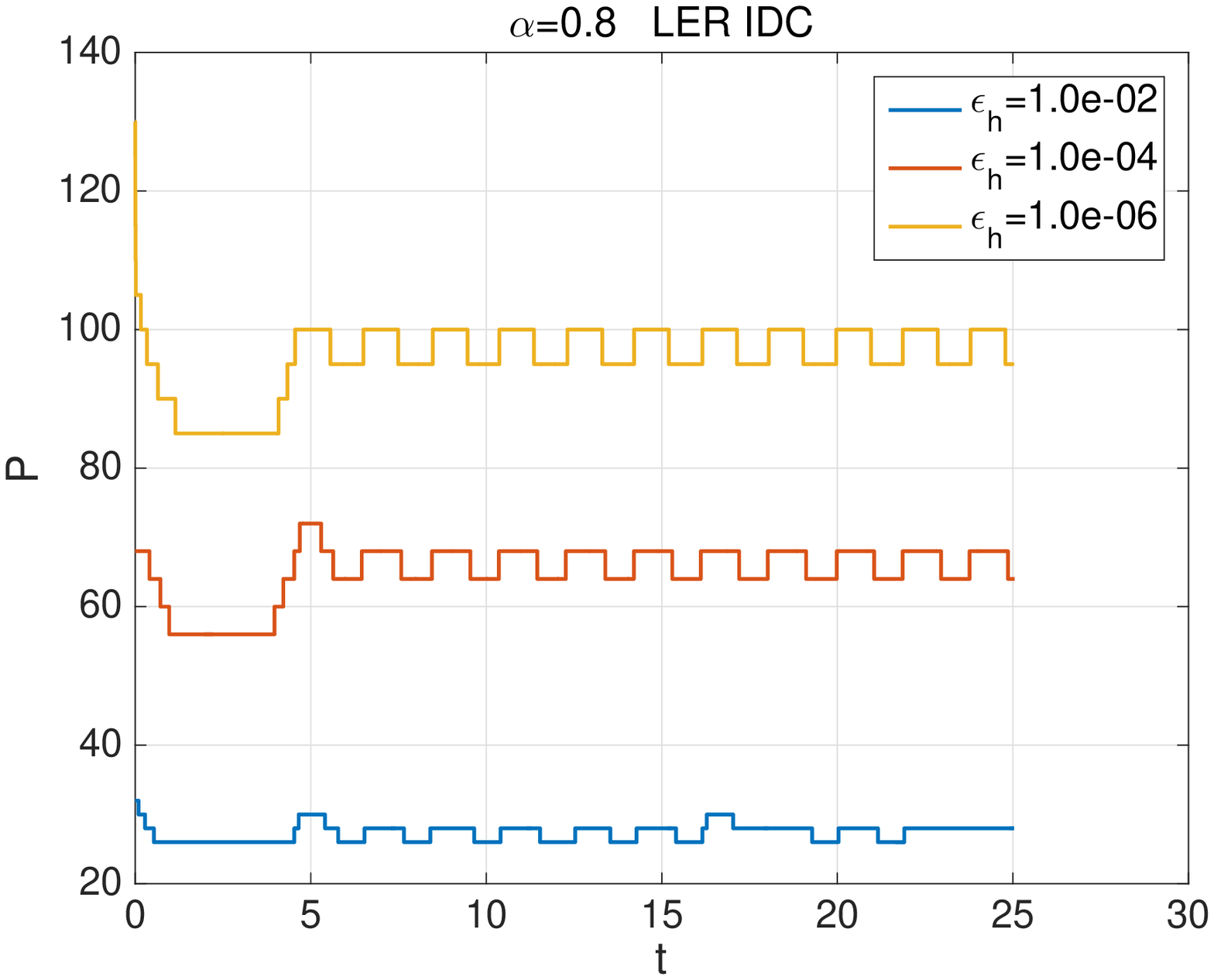}
	\caption{}
	\label{fig:VDP2_2}
\end{subfigure}%
\\
\begin{subfigure}[t]{\sfw}%
	\centering
	\includegraphics[width=\ssfw,trim=0 0 30 0]{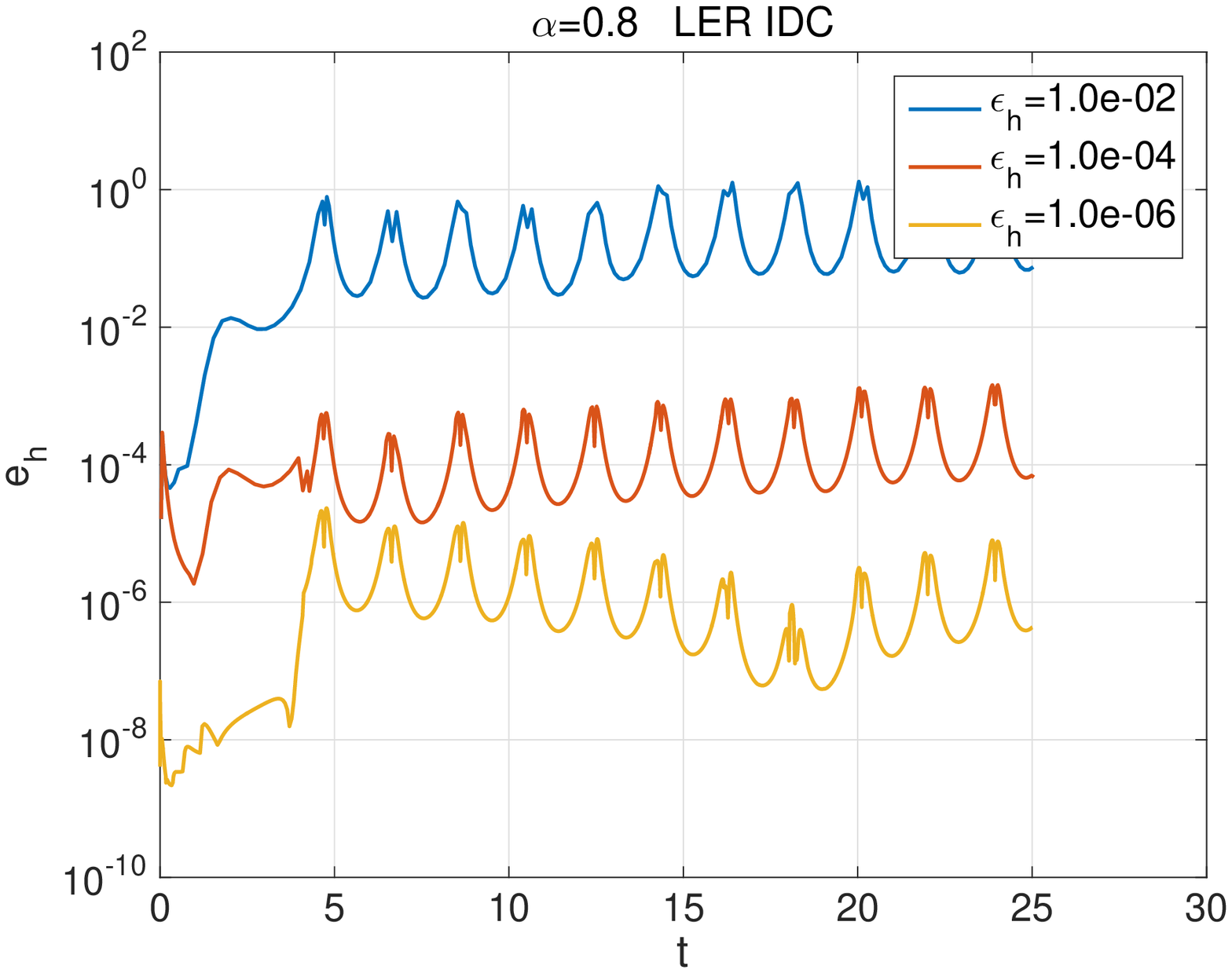}
	\caption{}
	\label{fig:VDP2_3}
\end{subfigure}%
\hfill{}
\caption{Fractional Van der Pol equation.
	\sSubref{fig:VDP2_1} The step size $\delt$ as a function of $t$.
	\sSubref{fig:VDP2_2} The number $P$ of auxiliary variables a function of $t$.
	\sSubref{fig:VDP2_3} The local error $e_h$ as a function of $t$.}
\label{fig:FracVDP2}
\end{figure}
\begin{figure}
\centering
\includegraphics[width=\sfw,trim=0 0 30 0]{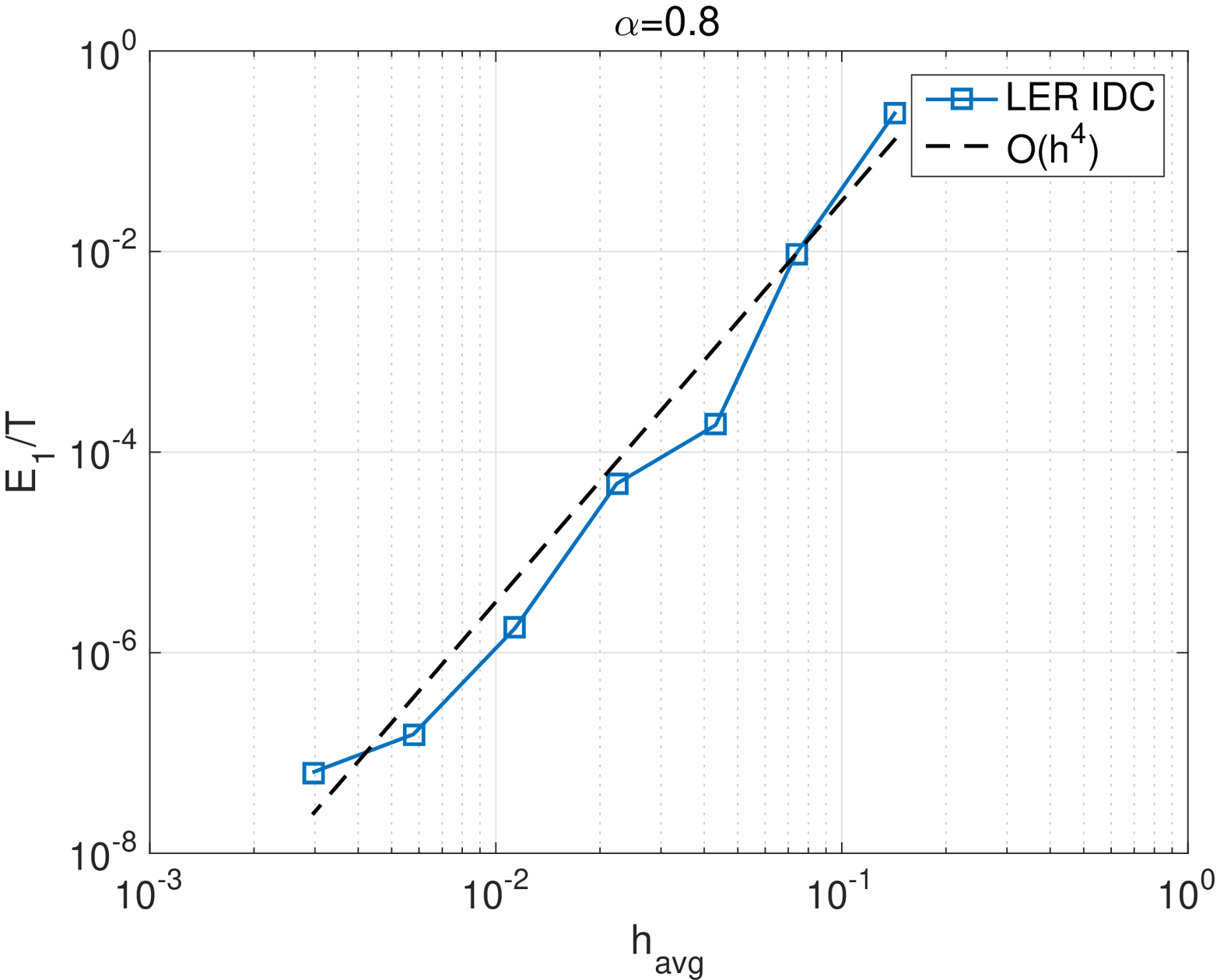}
\caption{Fractional Van der Pol equation; the global error $E_1$ as a function of the average step size $\delt_\mathrm{avg}$.}
\label{fig:FracVDP3}
\end{figure}

\section{Concluding remarks}
\label{sec:Conc}

A scheme is proposed for approximating the kernel $w_\del$ of the the history term of the fractional integral by a linear combination of exponentials \deq{eq:wdelApprox}.
Numerical results are presented, testing the approximation of the kernel directly and the application of the scheme as a part of two fully discrete time stepping methods: a low order method, and a high order fully adaptive method.

The kernel compression scheme is based on the application of a composite Gauss-Jacobi quadrature rule to an integral representation of the kernel of the fractional integral $w$.
We obtain estimate \deq{eq:mainErrEst} which shows that the error converges rapidly, as expected, in the number $J$ of quadrature nodes associated with each interval of the composite rule.
According to \deq{eq:mainErrEst}, the number $P=(K+1)J$ of terms required to satisfy a prescribed error tolerance is bounded for $\a\in(0,1)$.
Moreover, $J$ depends only on the error tolerance.
The numerical results support the validity of \deq{eq:mainErrEst} and show similar qualitative behavior.

The discussion above, however, should be considered with some care.
Since we currently do not have convergence analyses for time stepping methods that employ kernel compression schemes, it is not clear what type of estimates kernel compression schemes must satisfy in order to obtain convergence (in any sense).
This should be taken into account, since in the statements above, we rely on estimate \deq{eq:mainErrEst}, where the term $A_J$ associated with the number of quadrature nodes in each interval is independent of the parameters of the problem, while the scheme proposed in \cite{KC_FDEs}, for example, satisfies an estimate of the type \deq{Intro:RelL2Est} for any $f\in L^2\frb{0,T}$, with a similar property.

The above mentioned properties of the scheme, combined with the observation that the nodes $a_{kj}$, with $k\ge0$, and $J=1,\ldots,J$ are independent of $\del$, suggest that, similarly to the scheme proposed in \cite{KC_FDEs}, the present scheme has a modular structure which makes it convenient for use within adaptive step-size schemes \cite{KC_Adapt}.
Numerical tests performed with a high order adaptive step-size method incorporating the proposed kernel compression scheme show promising results.
The scheme is able to detect changes in the behavior of the solution and adapt the step size and the approximation of the kernel accordingly.

\appendix

\section{A technical lemma}

Estimate \deq{eq:W1_est} of Lemma \ref{lem:W1approx} relies on the infimum of
\beq
	R_J\frb{\ell}=\rb{3-\ell}^{-1}\rb{\ell+\sqrt{\ell^2-1}}^{-2J}\ ,
\eeq
for $\ell\in(1,3)$.
It is a simple exercise to show that for each $J\ge1$, $R_J$ has a unique minimum point in $(1,3)$ and to estimate the asymptotic behavior of that minimum, at the limit $J\to\infty$.
This is summarized in the following lemma.

\begin{lemma}\label{lem:estRJlJ}
For each $J\ge1$, $R_J$ has a unique minimizer $\ell_J$ in $(1,3)$ given by
\beq
	\ell_J=\frac{3-\mu\sqrt{8+\mu^2}}{1-\mu^2}\ ,
	\qquad
	\mu=\frac{1}{2J}\ .
\eeq
In particular $\ell_J\in(3/2,3)$, and at the limit $J\to\infty$, there holds
\beq
	R_J\frb{\ell_J} \sim \frac{\me}{\sqrt{2}}\, J\rb{3+\sqrt{8}}^{-2J}\ .
\eeq
\end{lemma}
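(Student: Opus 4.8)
The plan is to pass to $\log R_J$, whose derivative simplifies dramatically, locate the unique critical point by an elementary monotonicity argument, identify it with the stated $\ell_J$, and then extract the $J\to\infty$ asymptotics by a careful expansion in $\mu=1/(2J)$. First I would write, for $\ell\in(1,3)$,
\[
	\log R_J\frb{\ell} = -\log\rb{3-\ell} - 2J\log\rb{\ell+\sqrt{\ell^2-1}}\ ,
\]
and differentiate. Since $\frac{\dif}{\dif\ell}\rb{\ell+\sqrt{\ell^2-1}} = \frac{\ell+\sqrt{\ell^2-1}}{\sqrt{\ell^2-1}}$, the logarithmic derivative of the second factor collapses to $1/\sqrt{\ell^2-1}$, giving
\[
	\frac{\dif}{\dif\ell}\log R_J\frb{\ell} = \frac{1}{3-\ell} - \frac{2J}{\sqrt{\ell^2-1}}\ .
\]
Here $1/(3-\ell)$ is strictly increasing on $(1,3)$ and $2J/\sqrt{\ell^2-1}$ strictly decreasing, so their difference is strictly increasing; as it tends to $-\infty$ when $\ell\to1^+$ and to $+\infty$ when $\ell\to3^-$, there is exactly one stationary point, which is a strict minimum. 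This settles the uniqueness claim with no computation.

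Next I would solve the critical-point equation $\mu\sqrt{\ell^2-1}=3-\ell$. Squaring (legitimate since $3-\ell>0$) yields the quadratic $(1-\mu^2)\ell^2 - 6\ell + \rb{9+\mu^2}=0$, whose discriminant is $4\mu^2\rb{8+\mu^2}$, so the roots are $\rb{3\pm\mu\sqrt{8+\mu^2}}/\rb{1-\mu^2}$. The sign is fixed by the requirement $\ell\in(1,3)$: for $J\ge1$ we have $\mu\le\tfrac12$, hence $1-\mu^2>0$ and the minus sign gives the smaller root. Evaluating the strictly increasing function $g\frb{\ell}=\mu\sqrt{\ell^2-1}-\rb{3-\ell}$ at $\ell=3/2$, where $g<0$ because $\mu\sqrt5/2<3/2$, and at $\ell=3$, where $g>0$, confirms the unique root lies in $(3/2,3)$ and identifies it with the stated $\ell_J$.

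The delicate part is the asymptotics, and this is where I expect the main obstacle: because $\ell_J\to3$ while we raise $\ell_J+\sqrt{\ell_J^2-1}$ to the power $-2J$, an $O(\mu)$ correction to this base is amplified into an $O(1)$ multiplicative factor, so leading-order estimates do not suffice. I would first expand $3-\ell_J = 2\sqrt2\,\mu + O\rb{\mu^2}$, which already gives $\rb{3-\ell_J}^{-1}\sim J/\sqrt2$ since $1/\mu=2J$. Then, using $\sqrt{\ell_J^2-1}=\rb{3-\ell_J}/\mu$, a short expansion produces the key identity $\ell_J+\sqrt{\ell_J^2-1} = \rb{3+\sqrt8}\rb{1-\mu+O\rb{\mu^2}}$. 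Taking logarithms, multiplying by $2J$, and using $2J\mu=1$ together with $2J\cdot O\rb{\mu^2}=O\rb{1/J}$ gives $\rb{\ell_J+\sqrt{\ell_J^2-1}}^{-2J}\sim \me\,\rb{3+\sqrt8}^{-2J}$. Multiplying the two asymptotic factors yields $R_J\frb{\ell_J}\sim \frac{\me}{\sqrt2}\,J\rb{3+\sqrt8}^{-2J}$, as claimed; the only real care is to carry the expansion of $3-\ell_J$ to the order that certifies the $\rb{1-\mu}$ factor in the base and hence the limiting $\me^1$.
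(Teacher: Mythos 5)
Your proof is correct and follows essentially the same route as the paper: reduce the critical-point condition to $\mu\sqrt{\ell^2-1}=3-\ell$, solve the resulting quadratic $(1-\mu^2)\ell^2-6\ell+9+\mu^2=0$, verify $\ell_J\in(3/2,3)$, and expand $3-\ell_J$ and $\ell_J+\sqrt{\ell_J^2-1}$ in $\mu$ to extract the factor $\me/\sqrt{2}$. Working with $\log R_J$ and certifying the $(1-\mu)$ correction in the base via logarithms are only cosmetic variations on the paper's computation of $R_J'$ and its use of $(1-\mu)^{-1/\mu}\to\me$.
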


\begin{proof}
The derivative of $R_J$ is given by
\beq
	R_J'\frb{\ell}=\rb{3-\ell}^{-2}\frac{\rb{\ell+\sqrt{\ell^2-1}}^{-2J}}{\sqrt{\ell^2-1}}
		\rb{\sqrt{\ell^2-1}-2J\rb{3-\ell}}\ .
\eeq
Thus, the critical points of $R_J$, satisfy
\beq
	\mu\sqrt{\ell^2-1}=3-\ell\ ,
	\qquad
	\mu=\frac{1}{2J}\ .
\eeq
It is therefore clear that $R_J$ has a unique critical point and that this point is the unique minimizer of $R_J$ in $(1,3)$.
By squaring the last equality, we find that $\ell=\ell_J$ is a solution of the following equation
\beq
	\rb{1-\mu^2}\ell^2-6\ell+9+\mu^2=0\ .
\eeq
This equation has only one solution smaller than three given by
\beq
	\ell_J=\frac{3-\mu\sqrt{8+\mu^2}}{1-\mu^2}\ ,
	\qquad
	\mu=\frac{1}{2J}\ .
\eeq
Indeed, there holds
\beq
	\frac{3}{2} < 3-\frac{\sqrt{8+1/4}}{2}< \frac{3-\mu\sqrt{8+\mu^2}}{1-\mu^2}
		<\frac{3-\mu\sqrt{8\mu^2+\mu^2}}{1-\mu^2}=3\ .
\eeq
At the limit where $J$ tends to infinity, we have
\beq
	\ell_J=3-\mu\sqrt{8}+3\mu^2+O\frb{\mu^3}\ ,
	\qquad
	\mu=\frac{1}{2J}
\eeq
and therefore
\beq
	R_J\frb{\ell_J} = \frac{\mu^{-1}}{\sqrt{8}\brb{1+O\frb{\mu}}}
			\rb{\rho-\rho\mu +O\frb{\mu^2}}^{-1/\mu}
\eeq
where $\rho=3+\sqrt{8}$.
Thus we recover
\beqa
	R_J\frb{\ell_J} & \sim \frac{J}{\sqrt{2}}\rho^{-2J}
			\rb{1-\mu}^{-1/\mu}
		\sim \frac{\me}{\sqrt{2}}\, J\rho^{-2J}
\eeqa
which completes the proof.
\end{proof}


\end{document}